\documentclass[twoside,11pt]{article}

\usepackage{jmlr2e}

\usepackage{graphicx}
\usepackage[T1]{fontenc}    %
\usepackage{hyperref}       %
\usepackage{booktabs}       %
\usepackage{url}            %
\usepackage{amsfonts}       %
\usepackage{tikz}
\usepackage{pgfplots}
\usepackage{pgf}
\usepackage{epstopdf}
\epstopdfsetup{outdir=./}
\usepgflibrary{shapes}
\usetikzlibrary{%
	arrows,%
	decorations.text,%
	positioning,%
	scopes,%
	shapes%
}
\usepackage{subfigure} 
\usepackage{hyperref}
\usepackage{algorithm,algorithmic}\usepackage{hyperref}
\usepackage{algorithm,algorithmic}

\usepackage{amssymb}
\usepackage{verbatim}
\usepackage{graphicx}					%
\usepackage{listings} 					%
\usepackage{amsmath}					%
\usepackage{amssymb}					%

\newcommand{\bT}{\mathbf{T}}
\newcommand{\R}{\mathbb{R}}
\newcommand{\Prob}{\mathbf{Pr}}
\newcommand{\E}{\mathbf{E}}
\DeclareMathOperator*{\argmax}{arg\,max}

\newtheorem{defi}{Definition}
\newtheorem{lem}{Lemma}
\newtheorem{thm}{Theorem}

\newtheorem{prp}{Proposition}

\newtheorem{assump}{Assumption}

\begin{document}

\title{Reinforcement Learning for Constrained and Multi-Objective Markov Decision Processes
}

\author{\name Ather Gattami \email ather.gattami@ai.se \\
	\addr AI Sweden\\
	Stockholm, Sweden \AND
	\name Qinbo Bai \email bai113@purdue.edu \\
	\addr School of Electrical and Computer Engineering\\
	Purdue University\\
	West Lafayette, IN 47907, USA
	\AND
	\name Vaneet Aggarwal\email vaneet@purdue.edu \\
	\addr School of IE and ECE\\
	Purdue University\\
	West Lafayette, IN 47907, USA}
		\editor{}

\if 0
\author{Ather Gattami, Qinbo Bai, and Vaneet Aggarwal
}

\institute{A. Gattami is with
              AI Sweden, 
              Stockholm, Sweden, 
              \email{ather.gattami@ai.se}. Q. Bai and V. Aggarwal are with Purdue University, West Lafayette IN, USA, email: \{bai113,vaneet\}@purdue.edu. 
}

\date{Received: date / Accepted: date}
\fi 

\maketitle

\begin{abstract}
In this paper, we consider the problem of optimization and learning for constrained and multi-objective Markov decision processes, for both discounted rewards and expected average rewards. We formulate the problems as zero-sum games where one player (the agent) solves a Markov decision problem and  its opponent solves a bandit optimization problem, which we here call Markov-Bandit games. We extend $Q$-learning to solve Markov-Bandit games and show that our new $Q$-learning algorithms converge to the optimal solutions of the zero-sum Markov-Bandit games, and hence converge to the optimal solutions of the constrained and multi-objective Markov decision problems. We provide a numerical example where we calculate the optimal policies and show by simulations that the algorithm converges to the calculated optimal policies. To the best of our knowledge, this is the first time learning algorithms guarantee convergence to optimal stationary policies for the constrained MDP problem with discounted and expected average rewards, respectively.

\end{abstract}
\begin{keywords}
	Reinforcement Learning, Constraints, Multi-Objective, Markov Decision Process
\end{keywords}

\section{Introduction}
\subsection{Motivation}
Reinforcement learning has made great advances in several applications, ranging from online learning and recommender engines, natural language understanding and generation, to mastering games such as Go \citep{silver:2017} and Chess. The idea is to learn from extensive experience how to take actions that maximize a given reward by interacting with the surrounding environment. The interaction teaches the agent how to maximize its reward without knowing the underlying dynamics of the process. A classical example is swinging up a pendulum in an upright position. By making several attempts to swing up a pendulum and balancing it, one might be able to learn the necessary forces that need to be applied in order to balance the pendulum without knowing the physical model behind it, which is the general approach of classical model based control theory \citep{astrom:1994}.

Informally, the problem of multi-objective reinforcement learning for Markov decision processes is described as follows.
Given a stochastic process with state $s_k$ at time step $k$, reward function $r^j$, and a discount factor $0<\gamma<1$, the multi-objective reinforcement learning problem is that for the optimizing agent to find a stationary policy $\pi(s_k)$ that satisfies for the discounted reward 
\begin{equation}
\label{dis}
\E\left( \sum_{k=0}^{\infty}\gamma^k r^j(s_k, \pi(s_k))\right) \ge 0
\end{equation}
or for the expected average reward
\begin{equation}
\label{ave}
\lim_{T\rightarrow \infty} \E\left(\frac{1}{T}\sum_{k=0}^{T-1} r^j(s_k, \pi(s_k))\right) \ge 0
\end{equation}
for  $ j=1, ..., J$ (a more formal definition of the problem is introduced in the next section).

The \textit{multi-objective} reinforcement learning problem of Markov decision processes is that of finding a policy that satisfies a number of constraints of the forms (\ref{dis}) or (\ref{ave}).

The following example from wireless communication describes in more detail a model where we have a Markov decision process with  constraints and where the agent doesn't have model knowledge.

\begin{example}[Altman, 1999]\label{eg:1}
	Consider a discrete time single-server queue with a buffer of finite size $L$. For a given time slot, we assume that at most one customer may join the system. The state of the system at a given time slot is the number of customers in the queue. There is a delay cost $c(s)$ given a state $s\in\{S_1, ..., S_n\}$ which one would like to keep as low as possible.  The probability of a service to be completed is $a^1$, where $1/a_1$ is the Quality of Service (QoS). The probability of queue arrival at time $t$ is $a^2$. The actions are given by $a^1$ and $a^2$. Let $c^1(a^1)$ be the cost to complete the service ($c^1$ is increasing in $a^1$). $c^1$ should be bounded by some value $v^1$. There is a cost corresponding to the throughput, $c^2(a^2)$,  ($c^2$ is decreasing in $a^2$). $c^2$ should be bounded by some value $v^2$.
	We assume that the number of actions is finite and actions sets are given by $a^1\in \{A_1^1, ..., A_{l_1}^1\}$ and $a^2\in \{A_1^2, ..., A_{l_2}^2\}$ where $0<A_1^1\le \cdots \le A_{l_1}^1\le 1$ and $0\le A_1^2 \le \cdots \le A_{l_2}^2 \le 1$. The transition probability $P(s_{k+1}, s_k, a^1_k, a^2_k)$ from state $s_k$ to $s_{k+1}$ given actions $a_k^1$ and $a_k^2$ is given by	
	\[ 
	P(s_+, s, a^1, a^2) =
	\begin{cases}
	(1-a^2)a^1      & \quad \text{if } L\ge s \ge 1,\\ &\quad s_+ = s-1\\
	a^2 a^1 + (1-a^2)(1-a^1)  & \quad \text{if } L\ge s \ge 1,\\ &\quad s_+ = s\\
	a^2 (1-a^1)      & \quad \text{if } L\ge s \ge 0,\\ &\quad s_+ = s+1\\
	1 - a^2 (1-a^1)     & \quad \text{if } L\ge s \ge 0,\\ &\quad s_+ = s = 0
	\end{cases}
	\]
	For $\gamma \in (0, 1)$, the constrained Markov decision process problem is given by
	\begin{equation}
	\label{serverex}
	\begin{aligned}
	\min_{\pi^1, \pi^2} ~~& \E\left( \sum_{k=0}^{\infty}\gamma^k c(s_k)\right)\\
	\textup{s. t.}~~ 	& \E\left( \sum_{k=0}^{\infty}\gamma^k c^1(\pi^1(s_k))\right) \le v^1 \\
	& \E\left( \sum_{k=0}^{\infty}\gamma^k c^2(\pi_2(s_k))\right) \le v^2,
	\end{aligned}
	\end{equation}
	which is equivalent to
	\begin{equation}
	\label{serverex}
	\begin{aligned}
	\max_{\pi^1, \pi^2} ~~& \E\left( \sum_{k=0}^{\infty}\gamma^k r(s_k, \pi(s_k)\right)\\
	\textup{s. t.}~~ 	& \E\left( \sum_{k=0}^{\infty}\gamma^k r^1(s_k, \pi(s_k))\right) \ge 0 \\
	& \E\left( \sum_{k=0}^{\infty}\gamma^k r^2(s_k, \pi(s_k))\right) \ge 0,
	\end{aligned}
	\end{equation}
	where $a_k = (a^1_k, a^2_k)$, $\pi(s_k) = (\pi^1(s_k), \pi^2(s_k))$, $r(s_k, a_k) = -c(s_k)$, $r^1(s_k, a_k) = -c^1(a^1_k) + v^1\cdot(1-\gamma)$,
	$r^2(s_k, a_k) = -c^2(a^2_k) + v^2\cdot (1-\gamma)$
\end{example}

\begin{example}[Search Engine]
	In a search engine, there is a number of documents that are related to a certain query. There are two values that are related to every document, the first being a (advertisement) value $u_i$ of document $i$ for the search engine and the second being a value $v_i$ for the user (could be a measure of how strongly related the document is to the user query). The task of the search engine is to display the documents in a row some order, where each row has an attention value, $A_j$ for row $j$. We assume that $u_i$ and $v_i$ are known to the search engine for all $i$, whereas the attention values $\{A_j\}$ are not known.
	The strategy $\pi$ of the search engine is to display document $i$ in position $j$, $\pi(i) = j$, with probability $p_{ij}$. Thus, the expected average reward for the search engine is
	$$
	R^{e} = \lim_{N\rightarrow \infty} \frac{1}{N}\sum_{i=1}^N \E(u_i A_{\pi(i)}) 
	$$
	and for the user
	$$
	R^u = \lim_{N\rightarrow \infty} \frac{1}{N}\sum_{i=1}^N \E(v_i A_{\pi(i)}). 
	$$
	The search engine has multiple objectives here where it wants to maximize the rewards for the user and itself. One solution is to define a measure for the quality of service for the user, $R^u\ge \underline{R}^u$ and at the same time satisfy a certain lower bound $\underline{R}^e$ of its own reward, that is
	\[
	\begin{aligned}
	\textup{find }   ~ &~ \pi \\
	\textup{s. t.}	~ &\lim_{N\rightarrow \infty} \frac{1}{N}\sum_{i=1}^N \E(u_i A_{\pi(i)}) \ge \underline{R}^e\\
	~ &  \lim_{N\rightarrow \infty} \frac{1}{N}\sum_{i=1}^N \E(v_i A_{\pi(i)}) \ge \underline{R}^u
	\end{aligned}
	\]
	
\end{example}

Surprisingly, although multi-objective (subclass of constrained) MDP problems are fundamental and have been studied extensively in the literature (see \citep{Altman99constrainedmarkov} and the references therein), the reinforcement learning counter part  seem to be still open. When an agent have to take actions based on the observed states, and constraint-outputs solely (without any knowledge about the dynamics, and/or constraint-functions), a general solution seem to be lacking to the best of the author's knowledge for both the discounted and expected average rewards cases. 

Note that maximizing (\ref{dis}) is equivalent to maximizing $\delta$ subject to the constraint
\begin{equation*}
\E\left( \sum_{k=0}^{\infty}\gamma^k r(s_k, \pi(s_k))\right) \geq \delta
\end{equation*}
which is in turn equivalent to
\begin{equation*}
\E\left( \sum_{k=0}^{\infty}\gamma^k (r(s_k, \pi(s_k)) - (1-\gamma)\delta)\right) \geq 0
\end{equation*}
since 
$$
\sum_{k=0}^{\infty}\gamma^k (1-\gamma)\delta = \frac{1}{1-\gamma}\cdot (1-\gamma)\delta = \delta.
$$
Thus, one could always replace $r$ with $r - (1-\gamma)\delta$ and obtain a constraint of the form (\ref{dis}). Similarly for the average reward case, one may replace $r$ with $r-\delta$ to obtain a constraint of the form  (\ref{ave}). Hence, we can run the bisection method with respect to $\delta$. 
In this paper, we will consider the problem of finding a policy that simultaneously satisfies constraints of the form (\ref{dis}) or (\ref{ave}).
In certain application, one might want to impose constraints on some reward functions for each time step. For the wireless communication example, there are certain applications in the 5G network architecture were high reliability/low latency requirements impose strict delay constraints at every time step \citep{johansson:2015}. That is, we will need more strict  constraints of the form
\begin{equation*}
\E(r^j(s_k, \pi(s_k))) \ge 0, ~~~\textup{for all } k, ~~~  j=1, ..., J-1,
\end{equation*}
where the expectation is taken with respect to $\pi$ 
(a more formal definition of the problem is introduced in the next section). 

\subsection{Previous Work}

Constrained MDP problems are convex and hence one can convert the constrained MDP problem to an unconstrained zero-sum game where the objective is the Lagrangian of the optimization problem \citep{Altman99constrainedmarkov}.  However, when the dynamics and rewards are not known, it doesn't become apparent how to do it as the Lagrangian will itself become unkown to the optimizing agent. 
Previous work regarding constrained MDPs, when the dynamics of the stochastic process are not known, considers scalarization through weighted sums of the rewards, see \citep{Roijers:2013} and the references therein. 
Another approach is to consider Pareto optimality when multiple objectives are present \citep{JMLR:v15:vanmoffaert14a}.
However, none of the aforementioned approaches guarantee to satisfy lower bounds for a given set of reward functions simultaneously.  Further, we note that  deterministic policies are not optimal \citep{Altman99constrainedmarkov}. 

In \citep{geibel:2006}, the author considers a single constraint and allowing for randomized policies. However,  no proofs of convergence are provided for the proposed sub-optimal algorithms. Sub-optimal solutions with convergence guarantees were provided in \citep{chow:2017} for the single constraint problem, allowing for randomized polices. In \citep{borkar:2005}, an actor-critic sub-optimal algorithm is provided for one single constraints and it's claimed that it can generalized to an arbitrary number of constraints. Reinforcement learning based model-free solutions have been proposed for the problems without guarantees \citep{djonin:2007,lizotte:2010,drugan:2013,achiam:2017,abels:2019,raghu2019deep}. 

Recently, \citep{RCPO} proposed a policy gradient algorithm with Lagrange multiplier in multi-time scale for discounted constrained reinforcement learning algorithm and proved that the policy converges to a feasible policy. \citep{EECM} found a feasible policy by using Lagrange multiplier and zero-sum game for reinforcement learning algorithm with convex constraints and discounted reward. \citep{paternain2019constrained} showed that constrained reinforcement learning has zero duality gap, which provides a theoretical guarantee to policy gradient algorithms in the dual domain. In constrast, our paper does not use policy gradient based algorithms.  \citep{CUCRL} proposed the C-UCRL algorithm which achieve sub-linear $O(T^{\frac{3}{4}}\sqrt{\log(T)/\delta})$ with probability $1-\delta$, while satisfiying the constraints. However, this algorithm needs the knowledge of the model dynamics. \cite{EECM} proposed 4 algorithms for the constrained reinforcement learning problem in primal, dual or primal-dual domain and showed a sub-linear bound for regret and constraints violations. However, all  these algorithms are model based. 

\if 0
Constrained MDP problems are convex and hence one can convert the constrained MDP problem to an unconstrained zero-sum game where the objective is the Lagrangian of the optimization problem \citep{Altman99constrainedmarkov}.  However, when the dynamics and rewards are not known, it doesn't become apparent how to do it as the Lagrangian will itself become unkown to the optimizing agent. 
Previous work regarding constrained MDPs, when the dynamics of the stochastic process are not known, considers scalarization through weighted sums of the rewards, see \citep{Roijers:2013} and the references therein. 
Another approach is to consider Pareto optimality when multiple objectives are present \citep{JMLR:v15:vanmoffaert14a}.
However, none of the aforementioned approaches guarantee to satisfy lower bounds for a given set of reward functions simultaneously. In \citep{Gabor:98}, a multi-criteria problem is considered where the search is over deterministic policies. In general, however, deterministic policies are not optimal \citep{Altman99constrainedmarkov}. Also, the multi-criteria approach in 
\citep{Gabor:98} may provide a deterministic solution to a multi-objective problem in the case of two objectives and it's not clear how to generalize to a number of objectives larger than two.
In \citep{geibel:2006} the author considers a single constraint and allowing for randomized policies. However,  no proofs of convergence are provided for the proposed sub-optimal algorithms. Sub-optimal solutions with convergence guarantees were provided in \citep{chow:2017} for the single constraint problem, allowing for randomized polices. In \citep{borkar:2005}, an actor-critic sub-optimal algorithm is provided for one single constraints and it's claimed that it can generalized to an arbitrary number of constraints. 
Sub-optimal solutions to constrained reinforcement learning problems with expected average rewards in a wireless communications context were considered in \citep{djonin:2007}. Sub-optimal reinforcement learning algorithms were presented in \citep{lizotte:2010} for controlled trial analysis with multiple rewards, again by considering a scalarization approach. In \citep{drugan:2013}, multi-objective bandit algorithms were studied by considering scalarization functions and Pareto partial orders, respectively, and present regret bounds. As in previous results, the approach in \citep{drugan:2013} doesn't guarantee to satisfy the constraints that correspond to the multiple objectives. In \citep{achiam:2017}, constrained policy optimization is studied for the continuous MDP problem and some heuristic algorithms were suggested. In \citep{abels:2019}, Deep Reinforcement Learning was applied in a dynamical weighting setting of the scalarized multi-objective problem.

\fi
\subsection{Contributions}
We consider the problem of optimization and learning for constrained and multi-objective Markov decision processes, for both discounted rewards and expected average rewards. We formulate the problems as zero-sum games where one player (the agent) solves a Markov decision problem and  its opponent solves a bandit optimization problem, which we here call Markov-Bandit games which are interesting on their own. The opponent acts on a \textit{finite} set (and not on a continuous space). This transformation is essential in order to achieve a tractable optimal algorithm. The reason is that using Lagrange duality without model knowledge requires infinite dimensional optimization in the learning algorithm since the Lagrange multipliers are continuous (compare to the intractability of a partially observable MDP, where the beliefs are continuous variables).We extend $Q$-learning to solve Markov-Bandit games and show that our new $Q$-learning algorithms converge to the optimal solutions of the zero-sum Markov-Bandit games, and hence converge to the optimal solutions of the constrained and multi-objective Markov decision problems. The proof techniques are different for solving the discounted and average rewards problems, respectively, where the latter becomes much more technically involved.
We provide a numerical example where we calculate the optimal policies and show by simulations that the algorithm converges to the calculated optimal policies. 
To the best of our knowledge, this is the first time learning algorithms guarantee convergence to optimal stationary policies for the constrained and multi-objective MDP problem with discounted and expected average rewards, respectively.

\subsection{Notation}
{
	\begin{tabular}{ll}
		$\mathbb{N}$ & The set of nonnegative integers.\\
		$[J]$ & The set of integers $\{1, ..., J\}$.\\
		$\mathbb{R}$ & The set of real numbers.\\
		$\mathbf{E}$ & The expectation operator.\\
		$\mathbf{Pr}$ &  $\mathbf{Pr}(x\mid y)$ denotes the probability of the\\
		&  stochastic variable $x$ given $y$.\\
		$\argmax$ 		& $\pi^\star = \argmax_{\pi\in \Pi} f_\pi$ denotes an 			
		element\\
		&  $\pi^\star \in 	\Pi$ that maximizes the function $f_\pi$.\\
		$\arg \max \min$ 		& $\pi^\star = \arg \max_{\pi\in \Pi} \min f_{\pi,o}$ denotes an 			
		element\\
		&  $(\pi^\star, o^\star) \in \Pi\times O$ that takes the maxmin over $f_{\pi,o}$.\\
		$\ge$ 				& For $\lambda = (\lambda_1, ..., \lambda_{J})$, 
		$\lambda \ge 0$ denotes\\ 
		& that $\lambda_i\ge 0$ for $i=1, ..., J$.	\\
		$1_{X}(x)$ & $1_{X}(x) = 1$ if $x\in \{X\}$ and\\ 
		& $1_{X}(x) = 0$ if $x\notin \{X\}$.\\
		$\mathbf{1}_n$ & $\mathbf{1}_{n} = (1, 1, ..., 1)\in \R^n$. \\
		$N(t, s, a, b)$ & $N(t, s, a, b) = \sum_{k=1}^t 1_{(s, a, b)}(s_k, a_k, b_k)$. \\
		e & $\textup{e}: (s,a,o)\mapsto 1$.\\
		$|S|$ & Denotes the number of elements in $S$.\\
		$s_+$        & For a state $s = s_k$, we have $s_+ = s_{k+1}$.
	\end{tabular}
}
\subsection{Outline}
In the problem formulation (Section \ref{probform}), we present a precise mathematical definition of the constrained reinforcement learning problem for MDPs. Then, we give a brief introduction to reinforcement learning with applications to zero-sum games and some useful results in the section on reinforcement learning for zero-sum Markov games (Section \ref{0-sum-games}). A solution to the constrained reinforcement learning problem is then presented in Section \ref{game-approach}. We demonstrate the proposed algorithm by  examples in Section \ref{sim} and we finally conclude the paper and  discuss future work in Section \ref{conc}. Most of the proofs are relegated to the Appendix.

\section{Problem Formulation}
\label{probform}
Consider a Markov Decision Process (MDP) defined by the tuple $(S, A, P)$, where  $S = \{S_1, S_2, ..., S_n\}$ is a finite set of states, $A = \{A_1, A_2, ..., A_m\}$ is a finite set of actions taken by the agent, and $P:S\times A \times S \rightarrow [0,1]$ is a transition function mapping each triple $(s, a, s_+)$ to a probability given by
$$P(s, a, s_+)=\mathbf{Pr}(s_+ \mid s, a)$$
and hence,
$$
\sum_{s_+\in S} P(s, a, s_+) = 1, ~~~ \forall (s,a)\in S\times A.
$$
Let $\Pi$ be the set of policies that map 
a state $s\in S$ to a probability distribution of the actions with a probability assigned to each action $a\in A$, that is $\pi(s) = a$ with probability $\Prob(a\mid s)$.
The agent's objective is to find a stationary policy $\pi\in \Pi$ that maximizes the expected value of the total discounted reward
or the expected value of the average reward 
for $s_0 = s\in S$, for some possibly unknown reward function.%

Multi-objective reinforcement learning is concerned with finding a policy that satisfies a set of constraints of the form (\ref{dis}) or (\ref{ave}), 
where $r^j:S\times A \rightarrow \R$ are bounded functions, for $j=1, ..., J$, possibly unknown to the agent. The parameter $\gamma \in (0,1)$ is a discount factor which models how much weight to put on future rewards. The expectation is taken with respect to the randomness introduced by the policy $\pi$ and the transition mapping $P$.  

\begin{defi}[Unichain MDP]
	An MDP is called unichain, if for each policy $\pi$ the Markov chain
	induced by $\pi$ is ergodic, i.e. each state is reachable from any other state.
\end{defi}
Unichain MDP:s are usually considered in reinforcement learning problems with discounted rewards,  since they guarantee that we learn the process dynamics from the initial states. Thus, for the discounted reward case we will make the following assumption. 
\begin{assump}[Unichain MDP]
	\label{unichain}
	The MDP $(S, A, P)$ is assumed to be unichain.
\end{assump}
For the case of expected average reward, we will make a simpler assumption regarding the existence of a recurring state, a standard assumption in Markov decision process problems with expected average rewards to ensure that the expected reward is independent of the initial state.

\begin{assump}
	\label{recurrentstate}
	There exists a state $s^* \in S$ which is recurrent for every stationary policy $\pi$ played by the agent.
\end{assump}

Assumption \ref{recurrentstate} implies that $\E(r^j(s_k, a_k))$ is independent of the initial state at stationarity. Hence, the constraint %
(\ref{ave}) is at stationarity equivalent to the inequality $\E(r^j(s_k, a_k)) \ge 0$, for all $k$.
We will use this constraint in the sequel which turns out to be very useful in the game-theoretic approach to solve the problem.
\begin{assump}
	\label{r}
	The absolute values of the reward functions $r$ and $\{r^j\}_{j=1}^{J}$ are bounded by some constant $c$ known to the agent.
\end{assump}

\section{Reinforcement Learning for Zero-Sum Markov-Bandit Games}
\label{0-sum-games}

A zero-sum Markov-Bandit game is defined by the tuple $(S, A, O, P, R)$, where $S$, $A$ and $P$ are defined as in section \ref{probform}, $O=\{o_1, o_2, ..., o_q\}$ is a finite set of actions made by the agent's \textit{opponent}. 
Let $\Pi$ be the set of policies $\pi(s)$ that map a state $s\in S$ to a probability distribution of the actions with a probability assigned to each action $a\in A$, that is $\pi(s) = a$ with probability $\Prob(a\mid s)$. 

For the zero-sum Markov-Bandit game, we define the reward $R: S\times A \times O \rightarrow \R$ which is assumed to be bounded. The agent's objective is to maximize the minimum (average or discounted) reward obtained due to the opponent's malicious action. The difference between a zero-sum Markov game and a Markov-Bandit game is that the opponent's action doesn't affect the state and it chooses a constant action $o_k=o\in O $ for all time steps $k$. This will be made more precise in the following sections.

\subsection{Discounted Rewards}
Consider a zero-sum Markov-Bandit game where the agent is maximizing the total discounted reward given by
\begin{equation}
	\label{minreward}
	V(s) = \min_{o}\E\left( \sum_{k=0}^{\infty}\gamma^k R(s_k, a_k, o)\right) 
\end{equation} 
for the initial state $s_0\in S$. Let $Q(s,a,o)$ be the expected reward of the agent taking action $a_0=a\in A$ from state $s_0=s$, and continuing with a policy $\pi$ thereafter when the opponent takes a fixed action $o$.
Note that this is different from zero-sum Markov games with discounted rewards \citep{Littman:1994}, where the opponent's actions may vary over time, that is $o_k$ is not a constant. 
Then for any stationary policy $\pi$, we have that
\begin{equation}
	\label{Q} 
	\begin{aligned}
		Q(s,a,o) 	&= R(s, a, o) + 
		\E\left( \sum_{k=1}^{\infty}\gamma^k R(s_+, \pi(s_+), o)\right)\\
		& = R(s, a, o) + \gamma \cdot 
		\E\left(Q(s_+, \pi(s_+), o)\right)\\
	\end{aligned}
\end{equation} 

\if 0 
and
\begin{equation}\label{Q} 
	\begin{aligned}
		\min_{o\in O} Q(s,a,o)  & = \min_{o\in O} \left(R(s, a, o) + \gamma \cdot 
		\E\left(Q(s_+, \pi(s_+), o)\right) \right).\\
	\end{aligned}
\end{equation}
\fi 
Equation (\ref{Q}) is known as the Bellman equation. The solution to (\ref{Q}), with respect to $Q$ and the initial state $s_0$ that corresponds to the optimal policy $\pi^\star$, is denoted $Q^\star$.
If we have the function $Q^\star$, then we can obtain the optimal policy $\pi^\star$ according to the equations
\begin{equation}
	\label{pistar0}
	\begin{aligned}
		&Q^\star(s,a,o) = 
		R(s, a, o) + \gamma \cdot 
		\E\left(Q^\star(s_+, \pi^\star(s_+), o)\right)\\
		&(\pi^\star(s_0), o^\star) = \arg \max_{\pi\in \Pi} \min_{o} \E\left(Q^\star(s_0, \pi(s_0), o)\right) \\
		&\pi^\star(s) = \argmax_{\pi\in \Pi} \E\left(Q^\star(s, \pi(s), o^\star)\right) \\
	\end{aligned}
\end{equation}
which maximizes the total discounted reward 
{\small
	\begin{equation*}
		\begin{aligned}
			\min_{o}\E\left( \sum_{k=0}^{\infty}\gamma^k R(s_k, \pi^\star(s), o)\right) = 
			\min_{o} \E\left(Q^\star(s, \pi^\star(s), o)\right) 
		\end{aligned}
	\end{equation*}
} 
\noindent for $s=s_0$. Note that the optimal policy may not be deterministic, as opposed to reinforcement learning for unconstrained Markov Decision Processes, where there is always an optimal policy that is deterministic. Also, not that we will get different $Q$ tables for different initial states here. Therefore, $Q^\star$ is in fact dependent on and varies with respect to $s_0$. A more proper notation would be to use $Q^\star_{s_0}$, but we omit the indexing with respect to $s_0$ for ease of notation. 
It's relevant to introduce the operator 
\[
\begin{aligned}
(\bT Q)(s,a, o) &= 
R(s, a, o) + \gamma \cdot 
\E\left(Q(s_+, \pi^\star(s_+), o)\right)\\
\end{aligned}
\]
which $\pi^*$ appears in Equation (\ref{pistar0}).
It's not hard to check that the operator $\mathbf{T}$  is not a contraction, so the standard $Q$-learning that is commonly used for reinforcement learning in Markov decision processes with discounted rewards can't be applied here.

In the case we don't know the process $P$ and the reward function $R$, we will not be able to take advantage of the Bellman equation directly. The following results show that we will be able to design an algorithm that always converges to $Q^\star$.

\begin{thm}
	\label{algo}
	Consider a  zero-sum Markov-Bandit game given by the tuple $(S, A, O, P, R)$ where $(S, A, P)$ is unichain, and suppose that $R$ is bounded by some constant and known aprior. 
	Let $Q=Q^\star$ and $\pi^\star$ be solutions to 
	\begin{equation}
		\label{bellman}
		\begin{aligned}
			Q(s,a,o) 	&= R(s, a, o) + \gamma \cdot
			\E\left(Q(s_+, \pi^\star(s_+), o)\right)\\
			(\pi^\star(s), o^\star) &= \arg \max_{\pi\in \Pi} \min_{o} \E\left(Q(s, \pi(s), o)\right) \\
			\pi^\star(s) &= \argmax_{\pi\in \Pi} \E\left(Q(s, \pi(s), o^\star)\right) \\
		\end{aligned}
	\end{equation}		
	Let $\alpha_k(s, a, o) = \alpha_k \cdot 1_{(s,a,o)}(s_k,a_k,o_k)$ satisfy 
	\begin{equation}
		\label{alpha}
		\begin{aligned}
			&0\le \alpha_k(s, a, o) < 1,\ \sum_{k=0}^\infty \alpha_k(s,a,o) = \infty,\\
			& \sum_{k=0}^\infty \alpha_k^2(s,a,o) < \infty,\ \forall (s,a,o) \in S\times A\times O.
		\end{aligned}
	\end{equation}
	Then, the update rule
	\begin{equation}
		\label{q-learning}
		\begin{aligned}
			(\pi_k, o_k)	&= \arg \max_{\pi\in \Pi} \min_{o} \E (Q_k(s_+, \pi(s_+), o))\\
			Q_{k+1} (s,a,o_k)&= \\
			(1-&\alpha_k(s,a,o_k))Q_k(s,a,o_k)+\alpha_k(s,a,o_k)\\
			&\times  (R(s,a,o_k)+\gamma  \E(Q_{k}(s_{+}, \pi_k(s_{+}), o_k)))
		\end{aligned}
	\end{equation}
	converges to $Q^\star$ with probability 1. Furthermore, the optimal policy $\pi^\star \in \Pi$ given by
	(\ref{pistar0}) maximizes
	(\ref{minreward}) with respect to the initial state $s = s_0$. That is, 
	\begin{equation*}
		\begin{aligned}
			(\pi^\star(s_0), o^\star) &= \arg \max_{\pi\in \Pi} \min_{o} \E\left(Q^\star(s_0, \pi(s_0), o)\right) \\
			\pi^\star(s) &= \argmax_{\pi\in \Pi} \E\left(Q^\star(s, \pi(s), o^\star)\right) \\
		\end{aligned}
	\end{equation*}
	
\end{thm}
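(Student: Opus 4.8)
The plan is to read the update rule (\ref{q-learning}) as an asynchronous stochastic approximation iteration and to invoke the standard almost-sure convergence theorem for such processes (the one underlying the convergence proofs of $Q$-learning and of minimax-$Q$ for Markov games, e.g.\ the Jaakkola--Jordan--Singh / Tsitsiklis framework). Writing $\Delta_k = Q_k - Q^\star$, I would put (\ref{q-learning}) in the form $\Delta_{k+1}(s,a,o) = (1-\alpha_k(s,a,o))\Delta_k(s,a,o) + \alpha_k(s,a,o) F_k(s,a,o)$, where $F_k(s,a,o) = R(s,a,o) + \gamma\, W_{Q_k}(s_+) - Q^\star(s,a,o)$ and $W_Q(s) = \max_{\pi\in\Pi}\min_{o\in O}\E(Q(s,\pi(s),o))$ is the saddle value of the per-state matrix game. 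The point is that, because $o_k$ is the minimizing opponent action produced in the first line of (\ref{q-learning}), the bootstrap term $\E(Q_k(s_+,\pi(s_+),o_k))$ realized in the update equals exactly $W_{Q_k}(s_+)$; the only randomness entering $F_k$ is the sampled successor $s_+$, so I would split $F_k = (\bT Q_k)(s,a,o) - Q^\star(s,a,o) + w_k$ with martingale-difference noise $w_k = \gamma\big(W_{Q_k}(s_+) - \E(W_{Q_k}(s_+))\big)$.

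The crux, and the step I expect to be the main obstacle, is the contraction property. As the excerpt notes, the operator $\bT$ obtained by evaluating $Q_k$ at a \emph{fixed} opponent index is not a contraction, so the whole argument must be organised around the saddle value $W_Q$ rather than around a fixed $o$. The key lemma I would prove is that the map $Q \mapsto W_Q$ is a non-expansion in the supremum norm: for two payoff matrices $M_1,M_2$ one has $|\mathrm{val}(M_1)-\mathrm{val}(M_2)|\le \max_{ij}|(M_1-M_2)_{ij}|$, since the mixed strategy optimal for one game is feasible in the other. Composing this non-expansion with the $\gamma$-discounted transition kernel then gives $\|\E(F_k\mid\mathcal F_k)\|_\infty = \gamma\,\|\E_{s_+} W_{Q_k}(s_+) - \E_{s_+} W_{Q^\star}(s_+)\|_\infty \le \gamma\|Q_k - Q^\star\|_\infty = \gamma\|\Delta_k\|_\infty$, i.e.\ a genuine $\gamma$-contraction toward the fixed point characterised by (\ref{bellman}). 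Before this I would establish existence and uniqueness of $(Q^\star,\pi^\star)$ solving (\ref{bellman}), combining the minimax theorem for the finite matrix game at each state with the contraction just exhibited.

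With the contraction in hand, I would discharge the remaining hypotheses of the stochastic-approximation theorem. Assumption \ref{r} (bounded rewards) yields a uniform bound on $Q_k$ and hence the bounded-variance estimate $\E(w_k^2\mid\mathcal F_k)\le K\big(1+\|\Delta_k\|_\infty^2\big)$ for a constant $K$; the unichain Assumption \ref{unichain} guarantees that every triple $(s,a,o)$ is visited infinitely often, so the step-size requirements (\ref{alpha}) are consistent along trajectories and the theorem applies to give $Q_k\to Q^\star$ with probability $1$.

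Finally, to show $\pi^\star$ from (\ref{pistar0}) maximizes (\ref{minreward}), I would run a verification argument. Iterating the fixed-point identity in (\ref{bellman}) shows that $\min_{o\in O}\E(Q^\star(s,\pi^\star(s),o))$ equals the discounted maxmin return of the stationary policy $\pi^\star$, while the $\max_{\pi}\min_{o}$ form of the greedy step, together with uniqueness of the $\gamma$-contraction's fixed point, shows that no stationary policy attains a strictly larger guaranteed value against the minimizing $o$. I would stress, as the statement does, that the maximizing $\pi^\star$ is in general randomized, so this last step genuinely requires optimizing over mixed policies and cannot be reduced to a search over deterministic ones.
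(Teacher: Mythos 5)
Your proposal follows essentially the same route as the paper's own proof: both cast \eqref{q-learning} as a Jaakkola--Jordan--Singh stochastic approximation on $\Delta_k=Q_k-Q^\star$, and both reduce the contraction condition $\|\E(F_k\mid\mathcal F_k)\|_\infty\le\gamma\|\Delta_k\|_\infty$ to the non-expansiveness of the maximin (saddle) value in the sup norm, which the paper establishes by the same two-case optimal-response argument that underlies your matrix-game value lemma, before verifying the variance bound from boundedness of $R$. The only differences are presentational (you state the non-expansion as a clean lemma about $\mathrm{val}(\cdot)$ and add explicit existence/uniqueness and verification steps that the paper leaves implicit), not substantive.
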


\subsection{Expected Average Rewards}       
The agent's objective is to maximize the minimal average reward obtained due to the opponent's malicious actions, that is maximizing the total reward given by
\begin{equation}
\label{minreward2}
\min_{o\in O} \lim_{T\rightarrow \infty} \E\left(\frac{1}{T}\sum_{k=0}^{T-1}  R(s_k, a_k, o)\right) 
\end{equation} 
for some initial state $s_0\in S$. Note that this problem is different from the zero-sum game considered in \citep{mannor:2004} where the opponent has to pick a fixed value for its action, $o_k = o$, as opposed to the work in \citep{mannor:2004} where $o_k$ is allowed to vary over time. Thus, from the opponent's point of view, the opponent is performing bandit optimization.

Under Assumption \ref{recurrentstate} and for a given stationary policy $\pi$, the value of
\begin{equation}
\label{vo}
V(o)\triangleq\lim_{T\rightarrow \infty}\E\left(\frac{1}{T}\sum_{k=0}^{T-1}  R(s_k, \pi(s_k), o)\right)
\end{equation} 
is independent of the initial state $s_0$ for any fixed value of the parameter $o$. 
We will make this  standard assumption in Markov decision process control problems.
\begin{prp}
	Consider an MDP $(S, A, P)$ with a total reward (\ref{vo}) for a fixed number $o$.  Under Assumption \ref{recurrentstate} and for a fixed stationary policy $\pi$, there exists a number $v(o)$ and a vector $H(s,o)=(H(S_1, o), ..., H(S_n, o))\in \mathbb{R}^{n}$, such that for each $s\in S$, we have that
	\begin{equation}
	\label{H}
	\begin{aligned}
	&H(s, o)  + v(o)= 
	\E \Big(R(s, \pi(s), o) + \sum_{s_+\in S} P(s_+\mid s, \pi(s))H(s_+,o)\Big).
	\end{aligned}
	\end{equation} 
	Furthermore, the value of (\ref{vo}) is $V(o) = v(o)$. 	
\end{prp}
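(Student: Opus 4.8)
The plan is to fix the opponent action $o$ and the stationary policy $\pi$, which collapses the MDP into a single finite-state Markov chain with transition probabilities $P^\pi(s, s_+) = \sum_{a} \Prob(a \mid s)\, P(s, a, s_+)$ and one-step mean reward $r^\pi(s) = \E(R(s, \pi(s), o))$. Everything then reduces to establishing a Poisson (relative value) equation for this chain. By Assumption \ref{recurrentstate} the state $s^*$ is recurrent for this chain, and since $S$ is finite, $s^*$ is positive recurrent, so the first return time $\tau = \min\{k \ge 1 : s_k = s^*\}$ has finite expectation $\E_{s^*}(\tau) < \infty$ and is reached from every initial state. I would define the candidate average value as the expected reward accumulated over one regeneration cycle divided by the expected cycle length,
\[
v(o) = \frac{\E_{s^*}\!\left(\sum_{k=0}^{\tau-1} r^\pi(s_k)\right)}{\E_{s^*}(\tau)}.
\]

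Next I would construct the bias vector explicitly by a first-passage decomposition: set $H(s^*, o) = 0$ and, for $s \ne s^*$,
\[
H(s, o) = \E_{s}\!\left(\sum_{k=0}^{\tau-1} \big(r^\pi(s_k) - v(o)\big)\right),
\]
which is finite because $\tau$ has finite mean and $R$ is bounded (Assumption \ref{r}). The verification of (\ref{H}) then proceeds by a one-step (first-step) analysis. For $s \ne s^*$, conditioning on the first transition and noting that the excess reward collected from the successor state until the next visit to $s^*$ is exactly $H(s_+, o)$ (using the convention $H(s^*, o)=0$) yields $H(s,o) = (r^\pi(s)-v(o)) + \sum_{s_+} P^\pi(s, s_+) H(s_+, o)$, which is (\ref{H}). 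For $s = s^*$ the same one-step expansion of the full-cycle excess reward gives $0 = (r^\pi(s^*) - v(o)) + \sum_{s_+} P^\pi(s^*, s_+) H(s_+, o)$, because the numerator in the definition of $v(o)$ is precisely $v(o)\,\E_{s^*}(\tau)$, so the cycle-averaged excess reward vanishes; this is again (\ref{H}) at $s^*$.

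Finally, to identify $v(o)$ with the long-run average $V(o)$ in (\ref{vo}), I would iterate (\ref{H}) forward $T$ steps to obtain $H(s,o) = \E_s(\sum_{k=0}^{T-1}(r^\pi(s_k)-v(o))) + \E_s(H(s_T, o))$. Since $H(\cdot, o)$ is bounded on the finite state space, dividing by $T$ and letting $T \to \infty$ kills both $H(s,o)/T$ and $\E_s(H(s_T,o))/T$, leaving $V(o) = \lim_{T\to\infty}\frac{1}{T}\E_s(\sum_{k=0}^{T-1} r^\pi(s_k)) = v(o)$, and the limit is independent of the initial state $s$ as claimed.

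The main obstacle I anticipate is not the algebra of the first-step analysis, which is routine, but pinning down the probabilistic foundation: justifying that $s^*$ is reached from every state with finite expected time and that the regeneration structure is valid under a possibly randomized policy. This is exactly where Assumption \ref{recurrentstate} is used, and care is needed because the assumption as stated guarantees a common recurrent state rather than an a priori unique recurrent class; the clean conclusion (independence of the initial state) follows once one argues that $s^*$ is accessible from every state, so that the chain is effectively unichain with $s^*$ in its unique recurrent class. An alternative route avoiding the renewal argument would be the linear-algebraic one: write (\ref{H}) as $(I - P^\pi)H = r^\pi - v(o)\mathbf{1}_n$ and observe it is solvable exactly when $r^\pi - v(o)\mathbf{1}_n$ is orthogonal to the stationary distribution $\mu$ of $P^\pi$, i.e. when $v(o) = \mu^\T r^\pi$, which again equals $V(o)$; but this presupposes uniqueness of $\mu$, so it ultimately rests on the same chain-structure fact.
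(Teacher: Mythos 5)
Your proof is correct, but it is worth noting that the paper does not actually prove this proposition at all: its entire ``proof'' is the single line ``Consult \citep{bertsekas:2005}.'' What you have written is essentially the standard renewal-theoretic derivation that such a citation points to --- define $v(o)$ as the expected reward per regeneration cycle at the recurrent state $s^*$, define $H(\cdot,o)$ as the first-passage excess reward with the normalization $H(s^*,o)=0$, verify the Poisson equation by a one-step decomposition, and recover $V(o)=v(o)$ by telescoping the equation over $T$ steps and dividing by $T$. So rather than taking a different route from the paper, you have filled in the route the paper outsources; your version has the advantage of being self-contained and of making explicit exactly where Assumption \ref{recurrentstate} and the boundedness of $R$ enter (finiteness of $\E_s(\tau)$ and of $H$, and the vanishing of the boundary terms $H(s,o)/T$ and $\E_s(H(s_T,o))/T$). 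The one point you rightly flag deserves emphasis: as literally stated, ``$s^*$ is recurrent for every stationary policy'' does not by itself rule out a second recurrent class from which $s^*$ is inaccessible, and your construction of $H(s,o)$ for $s\ne s^*$ needs $\E_s(\tau)<\infty$ from \emph{every} initial state. The assumption has to be read in the usual MDP sense (as in the cited textbook) that $s^*$ is reached with probability one from every state under every stationary policy; under that reading the chain is unichain with $s^*$ in its unique recurrent class and your argument goes through without change. Your alternative linear-algebraic remark --- solvability of $(I-P^\pi)H = r^\pi - v(o)\mathbf{1}_n$ forcing $v(o)=\mu^\T r^\pi$ --- is also correct and rests on the same structural fact.
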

\begin{proof}
Consult \citep{bertsekas:2005}.
\end{proof}
Introduce
\begin{equation}
	\label{Qreward2}
	\begin{aligned}
	Q(s, a, o) - v(o) = &  R(s, a, o) +  \sum_{s_+\in S} P(s_+\mid s, a)H(s_+, o)
	\end{aligned}
\end{equation} 
and let $Q^\star$, $v^\star$, and $H^\star$ be solutions to Equation (\ref{H})-(\ref{Qreward2}) corresponding to the optimal policy $\pi^\star$ that maximizes (\ref{minreward2}). Then we have that
\begin{equation}
\label{Qrewardopt}
\begin{aligned}
\pi^\star(s) &= \argmax_{\pi\in \Pi}\min_{o\in O} \E\left(Q^\star(s, \pi(s), o)\right)\\
H^\star(s, o) &= \E\left(Q^\star(s, \pi^\star(s), o)\right)\\
Q^\star (s, a, o)- v^\star(o) 
&= R(s, a, o) +  \sum_{s_+\in S} P(s_+\mid s, \pi^\star(s))H^\star(s_+, o) 
\end{aligned}
\end{equation}
In the case we don't know the process $P$ and the reward function $R$, we will not be able to take advantage of (\ref{Qreward2}) directly. 
Introduce the operator

We will make some additional assumptions that will be used in the learning of $Q^\star$ in the average reward case. We start off by introducing a sequence of learning rates $\{\beta_k\}$ and assume that this sequence satisfies the following assumption. 

\begin{assump}[Learning rate]
	\label{lr}
	The sequence $\beta_k$ satisfies:
	\begin{enumerate}
		\item $\beta_{k+1}\le \beta_k$ eventually
		\item For every $0<x<1$, $\sup_k \beta_{\lfloor xk \rfloor}/ \beta_k<\infty$
		\item $\sum_{k=1}^{\infty} \beta_k = \infty$ and $\sum_{k=0}^{\infty} \beta_k^2 < \infty$.
		\item For every $0<x<1$, the fraction
		$$
		\frac{\sum_{k=1}^{\lfloor yt\rfloor} \beta_k}{\sum_{k=1}^t \beta_k} %
		$$
		converges to 1 uniformly in $y\in [x, 1]$ as $t\rightarrow \infty$.
	\end{enumerate}
\end{assump}
For example, $\beta_k = \frac{1}{k}$ and $\beta_k = \frac{1}{k\log k }$ (for $k>1$) satisfy Assumption \ref{lr}. 

Now define $N(k, s, a, o)$ as the number of times that state $s$ and actions $a$ and $o$ were played up to time $k$, that is 
$$
N(k, s, a, o) = \sum_{t=1}^k 1_{(s, a, o)}(s_t, a_t, o_t).
$$

The following assumption is needed to guarantee that all combinations of the triple $(s, a, o)$ are visited often.
\begin{assump}[Often updates]
	\label{ou}
	There exists a deterministic number $d>0$ such that for every $s\in S$, $a\in A$, and $o\in O$, we have that
	$$
	\liminf_{k\rightarrow \infty} \frac{N(k, s, a, o)}{k} \ge d
	$$
	with probability 1.
\end{assump}

\begin{defi}
	\label{phi}
	We define the set $\Phi$ as the set of all functions $f:\R^{n\times m\times q}\rightarrow \R$ such that
	\begin{enumerate}
		\item $f$ is Lipschitz
		\item For any $c\in \R$, $f(cQ) = cf(Q)$
		\item For any $r\in \R$ and $\widehat{Q}(s, a, o) = Q(s, a, o)+r$ for all 
		$(s, a, o)\in \R^{n\times m\times q}$, we have $f(\widehat{Q}) = f(Q) + r$
	\end{enumerate}
\end{defi}
For instance, $f(Q) = \frac{1}{|S| |A| |O|} \sum_{s,a,o} Q(s,a,o)$ belongs the the set $\Phi$.

The next result shows that we will be able to design an algorithm that always converges to $Q^\star$. 
\begin{thm}
	\label{algo2}
	Consider a Markov-Bandit zero-sum game given by the tuple $(S, A, O, P, R)$ %
	and suppose that $R$ is bounded. Suppose that Assumption \ref{recurrentstate}, 
	\ref{lr}, and \ref{ou} hold. 
	Let %
	$f\in \Phi$ be given, where the set $\Phi$ is defined as in Definition \ref{phi}.
	Then, the asynchronous update algorithm given by 
	\begin{equation}
	\label{optQ}
	\begin{aligned}
	&Q_{k+1}(s,a,o) = Q_{k}(s,a,o) + 1_{(s, a, o)}(s_k, a_k, o_k)\times  \\
	&~~~\times \beta_{N(k, s, a, o)} \max_{\pi\in \Pi} \min_{o_k\in O} (R(s, a, o_k)
	+ \E(Q_k(s_{k+1}, \pi(s_{k+1}), o_k))- \\ 
	&\hspace{3.5cm} - f(Q_k)- Q_k(s, a, o))
	\end{aligned}
	\end{equation}
	converges to $Q^\star$ in (\ref{Qrewardopt}) with probability 1. Furthermore, the optimal policy $\pi^\star \in \Pi$ given by (\ref{Qrewardopt}) maximizes (\ref{minreward2}).
\end{thm}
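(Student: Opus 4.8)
The plan is to prove convergence by the ODE method for asynchronous stochastic approximation, adapting the relative value iteration (RVI) $Q$-learning analysis of Abounadi, Bertsekas and Borkar to the present minimax (game) setting. First I would rewrite the update (\ref{optQ}) in the canonical form $Q_{k+1}(s,a,o) = Q_k(s,a,o) + \one_{(s,a,o)}(s_k,a_k,o_k)\,\beta_{N(k,s,a,o)}\bigl[(HQ_k)(s,a,o) - Q_k(s,a,o) + M_{k+1}(s,a,o)\bigr]$, where the mean-field map $H$ is obtained by replacing the sampled successor $s_{k+1}$ by its conditional expectation under $P(\cdot\mid s,a)$, namely $(HQ)(s,a,o) = R(s,a,o) + \sum_{s_+}P(s_+\mid s,a)\,\E_{a_+\sim\pi_Q(s_+)}Q(s_+,a_+,o) - f(Q)$ with $\pi_Q(s)=\argmax_{\pi}\min_{o'}\E(Q(s,\pi(s),o'))$, and $M_{k+1}$ collects the sampling error. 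Using boundedness of $R$ (Assumption \ref{r}) and the Lipschitz property of $f$, I would verify that $M_{k+1}$ is a martingale-difference sequence with respect to the natural filtration, with conditionally bounded second moment. The asynchronous character, in which only the visited coordinate is updated on its own local clock $N(k,s,a,o)$, is exactly Borkar's asynchronous stochastic approximation setting: Assumption \ref{ou} (often updates) guarantees all local clocks grow at a comparable linear rate, while parts 2 and 4 of Assumption \ref{lr} are precisely the interpolation conditions ensuring the asynchronous iterates track the same limiting ODE as the synchronous ones.

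Next I would identify the limiting ODE $\dot Q = H(Q) - Q$ and establish that it has $Q^\star$ of (\ref{Qrewardopt}) as a globally asymptotically stable equilibrium. The key structural facts are, first, that the backup $Q \mapsto R + P\,\E_{\pi_Q}Q$ is nonexpansive in the sup-norm, since averaging over $P$, the outer $\max$ over $\pi$ and the inner $\min$ over $o$ are each nonexpansions; and second, that the defining properties of $\Phi$ make $H$ invariant under constant shifts, because property 3 of Definition \ref{phi} gives $f(Q+r\one)=f(Q)+r$, whence $H(Q + r\one) = H(Q)$ for every $r\in\R$. This is the mechanism by which subtracting $f(Q)$ cures the otherwise translation-degenerate average-reward equation and selects a unique representative of the value. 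Comparing the fixed-point relation $H(Q^\star)=Q^\star$ with (\ref{Qrewardopt}), the scalar $f(Q^\star)$ plays the role of the optimal game value and $\pi_{Q^\star}$ is the optimal policy, so the equilibrium of the ODE coincides with the $Q^\star$ asserted in the theorem.

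To obtain global asymptotic stability I would use $t \mapsto \|Q(t)-Q^\star\|_\infty$ (more precisely the span seminorm, exploiting the shift-invariance) as a Lyapunov function: nonexpansiveness of $H$ makes it nonincreasing along trajectories, and the shift-invariance together with the recurrence Assumption \ref{recurrentstate} yields the strict decrease that forces convergence to $Q^\star$, following the minimax analogue of the ABB argument. Separately, to justify invoking the ODE method I must establish almost sure boundedness of $\{Q_k\}$; here I would apply the Borkar--Meyn stability test with the scaled field $H_\infty(Q)=\lim_{c\to\infty}H(cQ)/c$. Homogeneity of $f$ (property 2 of Definition \ref{phi}) and the vanishing of the $R/c$ term give $H_\infty(Q)=\bar H(Q)-f(Q)$, the reward-free backup, whose ODE $\dot Q=H_\infty(Q)-Q$ has the origin as its globally asymptotically stable point, yielding the required boundedness.

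Combining boundedness, the martingale-noise estimates, and global asymptotic stability of the ODE, the asynchronous stochastic approximation convergence theorem gives $Q_k \to Q^\star$ almost surely; optimality of the extracted policy $\pi^\star(s)=\argmax_\pi\min_o \E(Q^\star(s,\pi(s),o))$ for problem (\ref{minreward2}) then follows from (\ref{Qrewardopt}) and the Poisson equation (\ref{H}). I expect the main obstacle to be the ODE step: showing that $\dot Q = H(Q)-Q$ is globally asymptotically stable with equilibrium exactly $Q^\star$. Unlike the plain average-reward case treated by ABB, here $H$ carries the nonsmooth minimax structure $\max_\pi\min_o$, so I must show that the inner minimization over the finite opponent set $O$ preserves both nonexpansiveness and the constant-shift invariance, and that a saddle point exists and is characterized by (\ref{Qrewardopt}). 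The asynchronous bookkeeping under Assumptions \ref{lr} and \ref{ou} is technically involved but routine once the limiting ODE is understood.
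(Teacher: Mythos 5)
Your proposal follows essentially the same route as the paper's proof: rewrite the update as asynchronous stochastic approximation with mean field $\bT Q - f(Q)\cdot\textup{e} - Q$ plus martingale noise, establish nonexpansiveness of the minimax backup in both the sup-norm and the span seminorm, invoke the Abounadi--Bertsekas--Borkar relative-value-iteration analysis for global asymptotic stability of the limiting ODE at $Q^\star$, and apply the Borkar--Meyn theorem (using homogeneity of $f$ for the scaled field $h_\infty$ and Assumptions \ref{lr} and \ref{ou} for the asynchronous bookkeeping). The decomposition, the key lemmas, and the cited machinery all coincide with the paper's argument.
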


\section{Reinforcement Learning for Constrained Markov Decision Processes}
\label{game-approach}
\subsection{Discounted Rewards}
Consider the optimization problem of finding a stationary policy $\pi$ subject to the initial state $s_0=s$ and the constraints (\ref{dis}), 
that is
\begin{equation}
\label{optproblem}
\begin{aligned}
\textup{find} ~~& {\pi\in \Pi}\\
\textup{s. t.}~~ 		& \E\left( \sum_{k=0}^{\infty}\gamma^k r^j(s_k, \pi(s_k))\right) \ge 0\\ & \textup{for }   j=1, ..., J. %
\end{aligned}
\end{equation}
The next theorem states that the optimization problem  (\ref{optproblem})
is equivalent to a zero-sum Markov-Bandit game, in the sense that an optimal strategy of the agent in the zero-sum game is also optimal for (\ref{optproblem}). %

\begin{thm}
	\label{0sumgame}
	Consider optimization problem (\ref{optproblem}) and suppose it's feasible and that Assumption \ref{r} holds.
	Let $\pi^\star$ be an optimal stationary policy in the zero-sum game
	\begin{equation}
	\label{lagrange}
	v(s_0) =
	\max_{\pi\in\Pi}  \min_{j \in [J]} 
	\E \left(\sum_{k=0}^{\infty}\gamma^k r^j(s_k, \pi(s_k))\right).
	\end{equation}
	Then, $\pi^\star$ is a feasible solution to (\ref{optproblem}) if and only if $v(s_0)\ge 0$.
\end{thm}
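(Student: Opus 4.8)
The plan is to recognize the max-min problem (\ref{lagrange}) as an instance of the zero-sum Markov-Bandit game of Section \ref{0-sum-games}, and then to exploit the defining property of $\pi^\star$ as a maximizer. First I would set up the identification: take the opponent's action set to be $O = [J]$ and define the game reward by $R(s, a, j) = r^j(s, a)$. Under Assumption \ref{r} the reward $R$ is bounded, so Theorem \ref{algo} applies and guarantees that an optimal stationary policy $\pi^\star$ attaining the value $v(s_0)$ exists; moreover the value of this game is exactly the right-hand side of (\ref{lagrange}), because the opponent commits to a single constant action $o = j$ for all time steps, which turns $\min_{o\in O}\E(\sum_k \gamma^k R(s_k, a_k, o))$ into $\min_{j\in[J]} \E(\sum_k \gamma^k r^j(s_k, \pi(s_k)))$. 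Allowing the opponent to randomize over $[J]$ does not change this value, since the minimum of a function that is linear in the mixing weights over the simplex is attained at a vertex.

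Next, write $g_j(\pi) = \E(\sum_{k=0}^\infty \gamma^k r^j(s_k, \pi(s_k)))$ for the $j$-th discounted reward, so that feasibility of $\pi$ in (\ref{optproblem}) is exactly the statement that $g_j(\pi)\ge 0$ for every $j\in[J]$, equivalently $\min_{j\in[J]} g_j(\pi)\ge 0$. The crucial observation is that, because $\pi^\star$ is by hypothesis a maximizer in (\ref{lagrange}), we have the identity $\min_{j\in[J]} g_j(\pi^\star) = \max_{\pi\in\Pi}\min_{j\in[J]} g_j(\pi) = v(s_0)$.

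The iff then follows by chaining equivalences. For the forward direction, if $\pi^\star$ is feasible then $g_j(\pi^\star)\ge 0$ for all $j$, hence $v(s_0) = \min_{j} g_j(\pi^\star) \ge 0$. For the reverse direction, if $v(s_0)\ge 0$ then $\min_{j} g_j(\pi^\star) = v(s_0)\ge 0$, so each $g_j(\pi^\star)\ge 0$ and $\pi^\star$ is feasible. I would also record the consequence that ties in the standing feasibility hypothesis: if (\ref{optproblem}) admits some feasible $\pi_0$, then $\min_{j} g_j(\pi_0)\ge 0$, so $v(s_0)\ge \min_{j} g_j(\pi_0)\ge 0$ and hence $\pi^\star$ is itself feasible. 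In words, solving the game produces a feasible policy whenever one exists.

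The core equivalence argument is essentially immediate once the identity $\min_{j} g_j(\pi^\star) = v(s_0)$ is in place, so I do not expect the last paragraph to be the difficulty. The real content — and the step I would treat most carefully — is the reduction in the first paragraph: justifying the existence of a stationary optimal policy $\pi^\star$ via Theorem \ref{algo}, and verifying that the constant-opponent Markov-Bandit value genuinely coincides with the pointwise minimum over the finitely many constraint rewards, rather than a value in which the opponent's action varies with time. This is precisely the distinction the paper emphasizes between Markov-Bandit games and ordinary Markov games, and it is where the finiteness of $[J]$ and the stationarity of the opponent's action are essential.
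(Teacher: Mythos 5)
Your core argument --- writing $g_j(\pi)$ for the $j$-th discounted value, noting that $\min_{j\in[J]} g_j(\pi^\star)=v(s_0)$ because $\pi^\star$ is a maximizer, and observing that feasibility of $\pi^\star$ is exactly $\min_{j\in[J]} g_j(\pi^\star)\ge 0$ --- is precisely the paper's proof, which chains the same equivalences (the paper phrases the reverse direction as a contrapositive but the content is identical). Your opening paragraph on the Markov--Bandit identification and the existence of a stationary optimizer is extra scaffolding the paper does not include, since the existence of $\pi^\star$ is taken as a hypothesis of the theorem; it is harmless but not needed for the equivalence.
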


The interpretation of the game (\ref{lagrange}) is that the minimizer chooses index $j\in[J]$.

Now that we are equipped with Theorem \ref{algo} and \ref{0sumgame}, we are ready to state and prove our next result.

\begin{thm}
	\label{mainthm}
	Consider the constrained MDP problem (\ref{optproblem}) and suppose that it's feasible and that Assumption \ref{unichain} and \ref{r} hold. Also, introduce $O=[J]$, $o=j$, and
	\[
	R(s,a,o) = R(s,a,j) \triangleq r^j(s,a), ~~~~~  j=1, ..., J.
	\]
	Let $Q_k$ be given by the recursion according to  (\ref{q-learning}).  Then, $Q_k \rightarrow Q^\star$ as $k\rightarrow \infty$ where $Q^\star$ is the solution to (\ref{bellman}).
	Furthermore, the policy
	\begin{equation}
	\label{pistar}
	\pi^\star(s) = \argmax_{\pi\in \Pi} \min_{o\in O} \E\left(Q^\star(s, \pi(s), o)\right)  
	\end{equation}
	is an optimal solution to (\ref{optproblem}) for all $s\in S$.
\end{thm}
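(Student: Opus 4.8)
The plan is to combine Theorems \ref{algo} and \ref{0sumgame} almost verbatim, after checking that the identification $O=[J]$, $R(s,a,o)=r^j(s,a)$ turns the abstract Markov-Bandit game of Section \ref{0-sum-games} into the concrete game (\ref{lagrange}). First I would observe that under this identification the opponent's constant choice $o=j$ together with the reward $R(s,a,j)=r^j(s,a)$ makes the agent's objective (\ref{minreward}) read
\begin{equation*}
V(s)=\min_{j\in[J]}\E\left(\sum_{k=0}^{\infty}\gamma^k r^j(s_k,\pi(s_k))\right),
\end{equation*}
so that $\max_{\pi\in\Pi}V(s)$ is exactly the value $v(s)$ of the zero-sum game in Theorem \ref{0sumgame}, and the Bellman equation (\ref{bellman}) for the game is the one whose solution $Q^\star$ defines $\pi^\star$ through (\ref{pistar}).

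Next I would invoke Theorem \ref{algo}. Its hypotheses hold here: $(S,A,P)$ is unichain by Assumption \ref{unichain}, and $R$ is bounded because each $r^j$ is bounded by Assumption \ref{r}. Theorem \ref{algo} then delivers the first claim immediately, namely that the recursion (\ref{q-learning}) converges to $Q^\star$ with probability $1$, and it further guarantees that the greedy policy $\pi^\star$ of (\ref{pistar}) maximizes (\ref{minreward}); in other words $\pi^\star$ is an optimal stationary policy of the game (\ref{lagrange}).

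To obtain the ``furthermore'' claim I would establish $v(s_0)\ge 0$ and then quote Theorem \ref{0sumgame}. Feasibility of (\ref{optproblem}) supplies a policy $\pi$ with $\E\!\left(\sum_k\gamma^k r^j(s_k,\pi(s_k))\right)\ge 0$ for every $j$, hence $\min_j(\cdots)\ge 0$ for that $\pi$, and therefore $v(s_0)=\max_{\pi}\min_j(\cdots)\ge 0$. Since $\pi^\star$ is optimal in (\ref{lagrange}) and $v(s_0)\ge 0$, Theorem \ref{0sumgame} yields that $\pi^\star$ is feasible for (\ref{optproblem}); because (\ref{optproblem}) is a pure feasibility problem, feasibility is optimality.

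The step I expect to require the most care — beyond bookkeeping — is the uniformity in $s$: the conclusion asserts optimality for every $s\in S$, whereas the discounted value $\E\!\left(\sum_k\gamma^k r^j\mid s_0=s\right)$ genuinely depends on the initial state. The resolution is that $Q^\star$ solves the Bellman equation (\ref{bellman}) globally, so the single induced greedy policy $\pi^\star$ of (\ref{pistar}) is simultaneously optimal for the game from every initial state, and the feasibility argument above gives $v(s)\ge 0$ at each $s$ at which (\ref{optproblem}) is feasible. I would make this explicit by applying Theorem \ref{0sumgame} with $s_0=s$ ranging over $S$, carrying the same state-independent $\pi^\star$ throughout; under the stated feasibility this establishes optimality for all $s\in S$.
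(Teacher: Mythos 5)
Your proposal is correct and follows essentially the same route as the paper: reduce (\ref{optproblem}) to the zero-sum Markov-Bandit game via Theorem \ref{0sumgame}, apply Theorem \ref{algo} for convergence of (\ref{q-learning}) to $Q^\star$, and read off optimality of $\pi^\star$ from the game value. Your added care in deriving $v(s_0)\ge 0$ from feasibility and in noting that the single Bellman solution $Q^\star$ makes $\pi^\star$ optimal uniformly in $s$ fills in details the paper leaves implicit, but it is the same argument.
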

\begin{proof}
	According to Theorem \ref{0sumgame}, (\ref{optproblem}) is equivalent to the zero-sum game (\ref{lagrange}), which is equivalent to the zero-sum Markov-Bandit game given by $(S, A, O, P, R)$ with the objective
	\begin{equation}
	\label{cc0}
	\max_{\pi\in \Pi}\min_{o\in O}\E\left( \sum_{k=0}^{\infty}\gamma^k R(s_k, \pi(s_k), o)\right). 
	\end{equation}
	Assumption \ref{r} implies that $|R(s,a, o)|\le 2c$ for all $(s,a,o)\in S\times A\times O$.
	Now let $Q^\star$ be the solution to the maximin Bellman equation (\ref{bellman}). 
	According to Theorem \ref{algo}, $Q_k$ in the recursion given by (\ref{alpha})-(\ref{q-learning}) converges to $Q^\star$ with probability 1. By, definition, the optimal policy $\pi^\star$ achieves the value  of the zero-sum Markov-Bandit game in (\ref{pistar}), and thus achieves the value of (\ref{cc0}).  Hence, 
	\[
	\pi^\star(s) = \argmax_{\pi\in \Pi} \min_{o\in O} \E\left(Q^\star(s, \pi(s), o)\right)  
	\]
	and the proof is complete.
\end{proof}

Finally, the algorithm for Constrained Markov Decision Process with Discounted Reward is shown in Alg. \ref{alg:discounted}.
\begin{algorithm*}[tb]
	\caption{Zero Sum Markov Bandit Algorithm for CMDP with Discounted Reward}
	\label{alg:discounted}
	\begin{algorithmic}[1]
		\STATE Initialize $Q(s,a,o)\leftarrow 0$ for all $(s,a,o)\in\mathcal{S}\times\mathcal{A}\times\mathcal{O}$. Observe $s_0$ and Initial $a_0$ randomly. Select $\alpha_k$ according to Eq. \eqref{alpha}
		\FOR{Iteration $k=0,...,K$} 
		\STATE Take action $a_k$ and observe next state $s_{k+1}$
		\STATE $\pi_{k+1},o_k=\arg\max\limits_{\pi_{k+1}}\min\limits_{o\in\mathcal{O}} Q(s_{k+1}, \pi_{k+1}(s_{k+1}),o_k)$
		\STATE $Q(s_k,a_k,o_k)\leftarrow(1-\alpha_k)Q(s_k,a_k,o_k)+\alpha_k[r(s_k,a_k,o_k)+\gamma\mathbf{E}(Q(s_{k+1},\pi_{k+1}(s_{k+1}),o_k)]$
		\STATE Sample $a_{k+1}$ from the distribution $\pi_{k+1}(\cdot\vert s_{k+1})$
		\ENDFOR  
	\end{algorithmic}
\end{algorithm*}
In line 1, we initialize the Q-table, observe $s_0$ and select $a_0$ randomly. In line 3, we take the current action $a_k$ and observe the next state $s_{k+1}$ so that we can compute the max-min operator in line 4 based on the first line of Eq. \eqref{q-learning}. Line 5 updates the Q-table according to the second line of Eq. \eqref{q-learning}. Line 6 samples the next action from the policy gotten from the line 4.

\subsection{Expected Average Rewards}
Consider the optimization problem of finding a stationary policy $\pi$ subject to the constraints (\ref{ave}), that is
\begin{equation}
\label{optproblem2}
\begin{aligned}
\textup{find} ~~& {\pi\in \Pi}\\
\textup{s. t.}~~ & \lim_{T\rightarrow \infty} \E\left(\frac{1}{T}\sum_{k=0}^{T-1} r^j(s_k, \pi(s_k))\right)  \ge 0\\ &\textup{for }  j=1, ..., J.
\end{aligned}
\end{equation}
The next theorem states that the optimization problem  (\ref{optproblem2})
is equivalent to a zero-sum Markov-Bandit game, in the sense that an optimal strategy of the agent in the zero-sum game is also optimal for (\ref{optproblem2}). %
\begin{thm}
	\label{0sumgame2}
	Consider optimization problem (\ref{optproblem2}) and suppose that Assumption \ref{recurrentstate} and \ref{r} hold. 
	Let $\pi^\star$ be an optimal policy in the zero-sum game
	\begin{equation}
	\label{lagrange2}
	v =
	\max_{\pi\in \Pi}\min_{j \in [J]} ~~ \lim_{T\rightarrow \infty} \E\left(\frac{1}{T}\sum_{k=0}^{T-1}  r^j(s_k, \pi(s_k))\right).
	\end{equation}
	Then, $\pi^\star$ is a solution to (\ref{optproblem2}) if and only if $v\ge 0$.
\end{thm}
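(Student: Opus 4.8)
The plan is to reduce the theorem to the single identity $v = \min_{j\in[J]} g^j(\pi^\star)$, where I write
\[
g^j(\pi) \triangleq \lim_{T\to\infty}\E\left(\frac{1}{T}\sum_{k=0}^{T-1} r^j(s_k,\pi(s_k))\right)
\]
for the long-run average of the $j$-th reward under a stationary policy $\pi$. First I would establish that each $g^j(\pi)$ is a well-defined real number, independent of the initial state: this is exactly what Assumption \ref{recurrentstate} together with the Proposition establishing (\ref{H}) guarantees, applied to each reward function $r^j$ in place of $R(\cdot,\cdot,o)$. With the $g^j$ well defined, the game (\ref{lagrange2}) reads $v = \max_{\pi\in\Pi}\min_{j\in[J]} g^j(\pi)$, where the inner minimum is simply a minimum over the finite index set $[J]$ (the minimizing player chooses a single index $j$), so that no minimax or saddle-point theorem is required.

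Next, since $\pi^\star$ is by hypothesis an optimal policy, it attains the outer maximum, so that $v = \min_{j\in[J]} g^j(\pi^\star)$. This single identity carries all the content. For the \emph{if} direction, assume $v\ge0$; then $\min_j g^j(\pi^\star) = v \ge 0$ forces $g^j(\pi^\star)\ge0$ for every $j\in[J]$, which is precisely the statement that $\pi^\star$ satisfies all the constraints of (\ref{optproblem2}), i.e. $\pi^\star$ is a feasible solution. For the \emph{only if} direction, assume $\pi^\star$ is a solution; then $g^j(\pi^\star)\ge0$ for all $j$, hence $\min_j g^j(\pi^\star)\ge0$, and by the identity $v\ge0$. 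Thus feasibility of $\pi^\star$ and $v\ge0$ are equivalent.

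I would also record the sharper qualitative consequence implicit in the argument: because $v=\max_{\pi}\min_j g^j(\pi)$, the condition $v<0$ means $\min_j g^j(\pi)<0$ for every stationary $\pi$, so in that case no stationary policy is feasible at all; conversely $v\ge0$ means the problem is feasible and $\pi^\star$ is an explicit witness. This is what makes $\pi^\star$ the natural object to hand to the learning algorithm of Theorem \ref{algo2}, and it mirrors the discounted reduction of Theorem \ref{0sumgame} verbatim, the only difference being the source of well-definedness of the objective.

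The equivalence itself is essentially immediate; the real work lies in the preliminaries. The step I expect to require the most care is justifying that $g^j(\pi)$ is well defined and initial-state-independent for every stationary $\pi$ — i.e. invoking Assumption \ref{recurrentstate} and the Proposition correctly — and, if the existence of $\pi^\star$ is to be argued rather than assumed, showing that $\pi\mapsto \min_j g^j(\pi)$ is upper semicontinuous on the compact set $\Pi$ of stationary randomized policies, which follows from continuity of each $g^j$, itself a consequence of the stationary distribution depending continuously on $\pi$ under Assumption \ref{recurrentstate}. Given these standard facts, the chain of equivalences above completes the proof.
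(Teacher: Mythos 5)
Your proposal is correct and follows essentially the same route as the paper: both arguments reduce to the observation that $\min_{j}$ of the average-reward constraint values is nonnegative exactly when all constraints hold, combined with the fact that $\pi^\star$ attains the outer maximum, so $v=\min_j g^j(\pi^\star)$ and feasibility of $\pi^\star$ is equivalent to $v\ge 0$. Your added care about well-definedness of the limits under Assumption \ref{recurrentstate} and about existence of $\pi^\star$ goes slightly beyond what the paper writes down, but does not change the argument.
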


Now that we are equipped with Theorem \ref{algo2} and \ref{0sumgame2}, we are ready to state and proof the second main result.

\begin{thm}
	\label{mainthm2}
	Consider the constrained Markov Decision Process problem   (\ref{optproblem2}) and suppose that Assumption \ref{recurrentstate} and \ref{r} hold. Introduce $O=[J]$, $o=j$ and
	\[
	R(s,a,o) = R(s,a,j) \triangleq r^j(s,a), ~~~~~  j=1, ..., J
	\] 
	Let $Q_k$ be given by the recursion according to  (\ref{optQ}) and suppose that Assumptions \ref{lr} and \ref{ou} hold. Then, $Q_k \rightarrow Q^\star$ as $k\rightarrow \infty$ where $Q^\star$ is the solution to (\ref{Qrewardopt}).
	Furthermore, the policy
	\begin{equation}
	\label{pistar2}
	\pi^\star(s) = \argmax_{\pi\in \Pi} \min_{o\in O} \E\left(Q^\star(s, \pi(s), o)\right)  
	\end{equation}
	is a solution to (\ref{optproblem}) for all $s\in S$.
\end{thm}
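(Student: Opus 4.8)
The plan is to reduce Theorem~\ref{mainthm2} to the two results already established for the average-reward setting, namely the equivalence in Theorem~\ref{0sumgame2} and the convergence in Theorem~\ref{algo2}, exactly paralleling the proof of Theorem~\ref{mainthm} in the discounted case. First I would invoke Theorem~\ref{0sumgame2}: under Assumptions~\ref{recurrentstate} and~\ref{r}, the constrained problem (\ref{optproblem2}) is equivalent to the zero-sum game (\ref{lagrange2}). With the identification $O=[J]$, $o=j$, and $R(s,a,o)=r^j(s,a)$, the inner minimization over $j\in[J]$ in (\ref{lagrange2}) becomes a minimization over the opponent action $o\in O$, so (\ref{lagrange2}) is precisely the value (\ref{minreward2}) of the zero-sum Markov-Bandit game $(S,A,O,P,R)$ with average-reward objective. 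Thus solving (\ref{optproblem2}) amounts to finding an agent policy attaining the maximin value (\ref{minreward2}).

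Next I would verify the hypotheses of Theorem~\ref{algo2}. Assumption~\ref{r} bounds each $r^j$ by $c$, hence the induced reward satisfies $|R(s,a,o)|\le c$ for all $(s,a,o)\in S\times A\times O$, so $R$ is bounded as required; Assumptions~\ref{recurrentstate}, \ref{lr}, and~\ref{ou} are assumed in the statement, and $f\in\Phi$ is fixed with $\Phi$ as in Definition~\ref{phi}. Theorem~\ref{algo2} then gives that the asynchronous recursion (\ref{optQ}) produces iterates $Q_k$ converging to $Q^\star$ with probability $1$, where $Q^\star$ solves (\ref{Qrewardopt}). Finally, the policy $\pi^\star$ defined by (\ref{pistar2}) is, by the last part of Theorem~\ref{algo2}, the maximin-optimal agent policy for the Markov-Bandit game, i.e.\ it attains (\ref{minreward2}). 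Combining this with the equivalence from Theorem~\ref{0sumgame2}, and using that feasibility of (\ref{optproblem2}) is equivalent to $v\ge 0$, I would conclude that $\pi^\star$ solves the constrained problem (\ref{optproblem2}) for every initial state $s\in S$, the value being independent of $s_0$ by Assumption~\ref{recurrentstate}.

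The main obstacle is not in this concatenation step, which is essentially bookkeeping once the two prior theorems are granted, but in checking that the reduction is faithful at the level of the average-reward value. Specifically, one must confirm that the stationary characterization of the average reward (the Proposition giving $V(o)=v(o)$ via the relative-value equations (\ref{H})--(\ref{Qreward2})) is what underlies the fixed point (\ref{Qrewardopt}), so that the maximin taken over a \emph{stochastic} agent policy $\pi$ and the opponent index $o$ is consistent with the relative value function $H^\star(s,o)=\E(Q^\star(s,\pi^\star(s),o))$, with the additive-constant ambiguity in $H$ pinned down by the normalization $f\in\Phi$. Since this technical content is already absorbed into Theorem~\ref{algo2} (which the authors note is the more involved of the two convergence results), the remaining argument for Theorem~\ref{mainthm2} is the direct assembly described above.
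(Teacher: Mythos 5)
Your proposal is correct and follows essentially the same route as the paper's own proof: invoke Theorem~\ref{0sumgame2} to identify (\ref{optproblem2}) with the average-reward zero-sum Markov-Bandit game (\ref{cc20}), check boundedness of $R$ from Assumption~\ref{r}, apply Theorem~\ref{algo2} for the convergence $Q_k\to Q^\star$, and read off the optimal policy from (\ref{pistar2}). The only (immaterial) difference is the constant in the bound on $R$ — you write $|R|\le c$ where the paper writes $|R|\le 2c$; either suffices since only boundedness is needed.
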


\begin{proof}
	According to Theorem \ref{0sumgame2}, (\ref{optproblem2}) is equivalent to the zero-sum Markov-Bandit game (\ref{lagrange2}), which is equivalent to the zero-sum Markov-Bandit game given by the tuple $(S, A, O, P, R)$ with the objective
	\begin{equation}
	\label{cc20}
	\max_{\pi\in \Pi}\min_{o\in O} ~~ \lim_{T\rightarrow \infty} \E\left(\frac{1}{T}\sum_{k=0}^{T-1} R(s_k, \pi(s_k), o)\right).
	\end{equation}
	Assumption \ref{r} implies that $|R(s,a, o)|\le 2c$ for all $(s,a,o)\in S\times A\times O$.
	Now let $Q^\star$ be the solution to the maximin optimality equation (\ref{Qrewardopt}). 
	According to Theorem \ref{algo2}, $Q_k$ in the recursion given by (\ref{optQ}) converges to $Q^\star$ with probability 1 under Assumptions \ref{recurrentstate},  \ref{r}, \ref{lr}, and \ref{ou}. By definition, the optimal policy 
	$\pi^\star$ maximizes the expected average reward of the zero-sum Markov-Bandit game (\ref{cc20}). Hence,
	\[
	\pi^\star(s) = \argmax_{\pi\in \Pi} \min_{o\in O} \E\left(Q^\star(s, \pi(s), o)\right)  
	\]
	and the proof is complete.
\end{proof}

The algorithm for Constrained Markov Decision Process with Discounted Reward is in Alg. \ref{alg:average}.
\begin{algorithm*}[tb]
	\caption{Zero Sum Markov Bandit Algorithm for CMDP with Average Reward}
	\label{alg:average}
	\begin{algorithmic}[1]
		\STATE Initialize $Q(s,a,o)\leftarrow 0$ and $N(s,a,o)\leftarrow 0$ for all $(s,a,o)\in\mathcal{S}\times\mathcal{A}\times\mathcal{O}$. Observe $s_0$ and initialize $a_0$ randomly
		\FOR{Iteration $k=0,...,K$} 
		\STATE Take action $a_k$ and observe next state $s_{k+1}$
		\STATE $\pi_{k+1},o_k=\arg\max\limits_{\pi_{k+1}}\min\limits_{o\in\mathcal{O}} \bigg[R(s_k,a_k,o_k)+Q(s_{k+1}, \pi_{k+1}(s_{k+1}),o_k)\bigg]$
		\STATE $t=N(s_k,a_k,o_k)\leftarrow N(s_k,a_k,o_k)+1$; $\alpha_t=\frac{1}{t+1}$
		\STATE $f=\frac{1}{\vert S\vert \vert A\vert O\vert}\sum_{s,a,o}Q(s,a,o)$
		\STATE $y=R(s_k,a_k,o_k)+\mathbf{E}[Q(s_{k+1},\pi_{k+1}(s_{k+1}),o_k]-f$
		\STATE $Q(s_k,a_k,o_k)\leftarrow(1-\alpha_t)Q(s_k,a_k,o_k)+\alpha_k*y$
		\STATE Sample $a_{k+1}$ from the distribution $\pi_{k+1}(\cdot\vert s_{k+1})$
		\ENDFOR  
	\end{algorithmic}
\end{algorithm*}
The most part of this algorithm is similar to Algorithm \ref{alg:discounted}. However, in line 1, we initialize the $N$ table, which records how many times $(s,a,o)$ has been met in the learning process and $N$ table is updated in line 5. Besides, in Line 6, $f$ is computed according to Def. \ref{phi}. Finally, in line 8, Q-table is updated according to the Eq. \eqref{algo2}.

\section{Simulations}
\label{sim}
In this section we will consider three examples for discounted rewards. The first two examples will consider single state. The first example will work out the first few steps of the proposed algorithm. The third example will be multi-state example based on Example \ref{eg:1} described in the Introduction. 

\subsection{Static Process Example 1}

	In this subsection, we consider  an example with  1 state (denoted as $1$), 2 actions (denoted as $1,2$), and two constraints. Let the reward function for the two constraints, $r^j(s,a)$ be given as 
	\begin{equation}
		r^1(1,1)=1 \quad r^1(1,2)=-1\qquad r^2(1,1)=-1 \quad r^2(1,2)=1
	\end{equation}
	The aim of this example is  to find a feasible policy that satisfies the discounted constraints. We let  $\gamma=\frac{1}{2}$ in this example. Since there is only a single state, we will ignore the first variable of state in the following. We note that the only stationary policy that satisfies the constraints in this example is $\pi(1)=\pi(2)=0.5$ due to the symmetry of the two constraints. We will now illustrate that the proposed algorithm will achieve a feasible  policy that satisfies the constraints.

	First, we define the reward function $R(a,o)$ for Markov zero-sum Bandit Game, $a,o\in \{1,2\}$ as
	\begin{equation}
	R(1,1)=1 \quad R(2,1)=-1\qquad R(1,2)=-1 \quad R(2,2)=1
	\end{equation}

	We let the initial value for the Q-function be 0 and assume that the action for $k=0$ is $1$. For the learning rate, we adopt $\alpha_k=\frac{1}{k+1}$. We also label the policy in time-step $i$ as $\pi_i$. 	According to  Theorem 4, we can use the update rule in Eq. \eqref{q-learning}  to obtain the feasible policy. For $k=0$, we have 
	\begin{equation}
	\begin{aligned}
		(\pi_1, o_0)&= \arg \max_{\pi\in \Pi} \min_{o\in O} Q_0(\pi_0(s),o)
	\end{aligned}
	\end{equation} 
	Since $Q_0=0$ for all $(a,o)\in\mathcal{A}\times\mathcal{O}$ and then the objective is not dependent on $\pi$, any arbitrarily policy can be used. Let us choose $\pi$ as a half-half policy such that $\pi_1(1)=\pi_1(2)=0.5$ and assume $a_1=2$. Similarly, $o_0$ can be arbitrary and we assume $o_0=1$. We also let $a_0=1$.  Using $a_0=1, o_0=1, \pi_1(1)=\pi_1(2)=0.5$, the Q-table update is given as
	
	\begin{equation}
		\begin{aligned}
		Q_{1}(1,1) &= (1-\alpha_0(1,1))Q_0(1,1)+\alpha_0(1,1)(R(1,1)+\gamma \E(Q_{0}(\pi_0, 1)))\\
		&=R(1,1)=1
		\end{aligned}
	\end{equation}
	At the end of $k=0$, we get $Q_1(1,1)=1$ and $Q_1(1,2)=Q_1(2,1)=Q_1(2,2)=0$.\\

	For $k=1$, we have
	\begin{equation}
	\begin{aligned}
		(\pi_2, o_1)&= \arg \max_{\pi\in \Pi} \min_{o\in O} Q_1(s,\pi(s),o)
	\end{aligned}
	\end{equation} 
	Since $Q_1(2,1)=Q_1(2,2)=0$, the maxmin problem will again have result 0 whatever the policy $\pi_2$ is. Thus, we still assume that $\pi_2(1)=\pi_2(2)=0.5$ and next action $a_2=1$. However, it follows that $o_1=2$ because $Q_1(1,1)=1$. Since $a_1=2, o_1=2, \pi_2(1)=\pi_2(2)=0.5$, the Q-table update is
	\begin{equation}
	\begin{aligned}
	Q_2(2,2) &= (1-\alpha_1(2,2))Q_1(2,2)+\alpha_1(2,2)(R(2,2)+\gamma  \E(Q_{1}(\pi_1, 2)))\\
	&=0.5* 0 + 0.5 * (1+0.5* 0)=0.5
	\end{aligned}
	\end{equation}
	At the end of $k=1$, we get $Q_2(1,1)=1$, $Q_2(2,2)=0.5$ and $Q_2(1,2)=Q_2(2,1)=0$.\\

	For $k=2$, we have
	\begin{equation}
	\begin{aligned}
		(\pi_3, o_2)&= \arg \max_{\pi\in \Pi} \min_{o\in O} Q_2(s,\pi(s),o)
	\end{aligned}
	\end{equation}
	To solve this problem, it is equivalent to solve the following problem
	\begin{equation}\label{equivalent}
		\begin{aligned}
		\arg\max_{z}&\quad z\\
		s.t. &\quad z\leq Q_2(s,\pi(s),o)\quad \text{for}\quad o=1,2
		\end{aligned}
	\end{equation}
	Assume $\pi_3(1)=p,\pi_3(2)=1-p$, this is equivalent to solve the equation that $p*Q_2(1,1)+(1-p)*Q_2(1,2)=p*Q_2(1,2)+(1-p)*Q_2(2,2)$, which gives the result $\pi_3(1)=\frac{1}{3}$ and $\pi_3(2)=\frac{2}{3}$ and we assume the next action $a_3=2$. Due to the equality in the above equation, $o_2$ can again can be arbitrary and we assume $o_2=2$. Since $a_2=1$, the Q-table update is
	\begin{equation}
	\begin{aligned}
		Q_3(1,2) &= (1-\alpha_2(1,2))Q_2(1,2)+\alpha_2(1,2)(R(1,2)+\gamma  \E(Q_{2}(\pi_2, 2)))\\
		&=\frac{2}{3}* 0 + \frac{1}{3} * [-1 + 0.5 * (\frac{1}{3}* Q_2(1,2)+ \frac{2}{3}*Q_2(2,2))]=-\frac{5}{18}
	\end{aligned}
	\end{equation}
	At the end of $k=2$, we get $Q_3(1,1)=1$, $Q_3(2,2)=0.5$ and $Q_3(1,2)=-\frac{5}{18}$ and $Q_3(2,2)=0$.\\

	For $k=3$, we have
	\begin{equation}
	\begin{aligned}
	(\pi_4, o_3)&= \arg \max_{\pi\in \Pi} \min_{o\in O} Q_3(s,\pi(s),o)	
	\end{aligned}
	\end{equation} 
	We need  to solve the problem in the Equation \eqref{equivalent} to get the result of $\pi_4$ and the result is $\pi_4(1)=\frac{7}{16}$ and $\pi_4(2)=\frac{9}{16}$ and $o_3$ can be arbitrary, thus we assume that $o_3=1$. Since $a_3=2$, the Q-table update is given as
	\begin{equation}
		\begin{aligned}
		Q_4(2,1) &= (1-\alpha_3(2,1))Q_3(2,1)+\alpha_3(2,1)(R(2,1)+\gamma  \E(Q_{3}( 1, 1)))\\
		&=\frac{3}{4}* 0 + \frac{1}{4} * (-1+ 0.5* (\frac{7}{16} * Q_3(1,1)+\frac{9}{16}*Q_3(2,1)))=-\frac{3}{8}-\frac{1}{4}=-\frac{25}{128}
		\end{aligned}
	\end{equation}
	At the end of $k=3$, we get $Q_4(1,1)=1$, $Q_4(2,2)=0.5$ and $Q_4(1,2)=-\frac{5}{18}$ and $Q_4(2,1)=-\frac{25}{128}$.\\
	
	Based on these steps, we can keep on computing the update for Q-table. However, the computation is hard to do manually, and involves random choice of actions based on policy $\pi$. Thus, we simulate the performance of the algorithm and the Q-values  $Q_k(i,j)$ for iterations $k$ are depicted in Fig. \ref{Fig:cov}.
	
	\begin{figure}[htb]
		\centering
		\resizebox{.7\textwidth}{!}{  
			\input{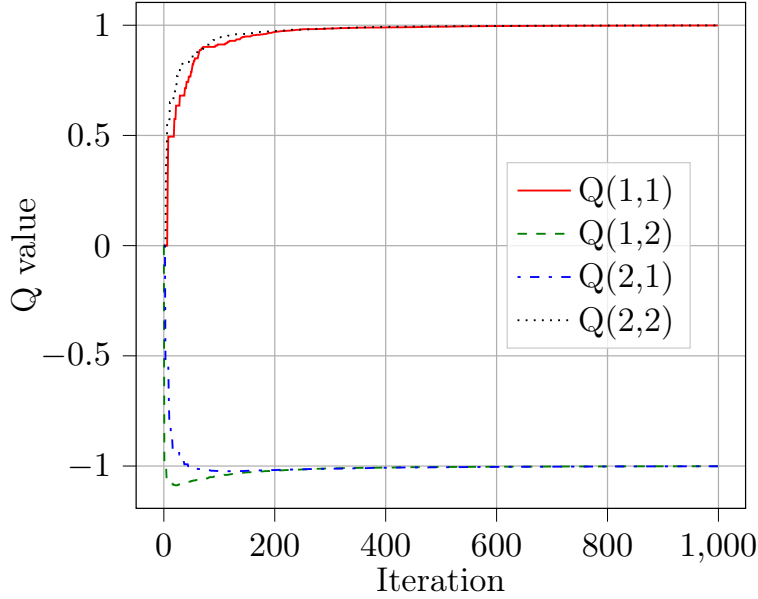}
		}
		\caption{Convergence result for  Example 1 }
		\label{Fig:cov}	
	\end{figure}
	
	We note that $Q_k(1,1)$ and $Q_k(2,2)$ converges to $1$, while $Q_k(1,2)$ and $Q_k(2,1)$ converges to $-1$. According to the optimal Bellman equation,
	\begin{equation}
	Q^*(s,a,o)=R(s, a, o) + \gamma \cdot 
	\E\left(Q^*(s_+, \pi^\star(s_+), o)\right)
	\end{equation}
	we know $Q^*(1,1)=1+0.5*[0.5*Q^*(1,1)+0.5*Q^*(2,1)]$, which means
	\begin{equation}
	3Q^*(1,1)=4+Q^*(2,1)
	\end{equation}
	Similarly, we have
	\begin{equation}
	3Q^*(2,1)=-4+Q^*(1,1)
	\end{equation}
	Combining these two equations, we have $Q^*(1,1)=-Q^*(2,1)=-1$. Similarly, $Q^*(2,2)=-Q^*(1,2)=-1$. Thus, we see that the algorithm successfully have the whole $Q$ table converges to $Q^*$, which shows the correctness of the theorem. Moreover,
	\begin{equation}
	\pi^*=\argmax_{\pi\in \Pi}\min_{o\in O}Q^*(s,\pi(s),o)
	\end{equation}
	which gives $\pi^*(1\vert s)=\pi^*(2\vert s)=0.5$ and we know this is the only feasible policy. Thus, we see that the Q-values of the proposed algorithm converges to that of the optimal policy and the policy converges to the only feasible policy in this example.

\subsection{Static Process Example 2}

We consider a  static process (that is, the state is constant) and an agent that takes action from the action set $A=\{1, 2, 3\}$. There are three objectives given by the reward functions $r_1, r_2$, and $r_3$ defined as 
\begin{equation*}
r^j(a) = 
\begin{cases}
\frac{1}{2} & \text{if $a = j$} \\
0 & \text{otherwise}
\end{cases}
\end{equation*}
Note that we have dropped the dependence of the reward functions $r_j$ on the state $s$ as the state $s$ is assumed to be constant. Let the discount factor be $\gamma =\frac{1}{2}$ and let $$\alpha_0 = \alpha_1 = \alpha_2 = \alpha = \frac{1}{3}.$$
The agent would then be looking for a probability distribution over the set $A$, $\Prob(a)$ for $a\in A$, that simultaneously satisfies the objectives
\begin{equation*}
\E\left( \sum_{k=0}^{\infty}\gamma^k r^j(a_k)\right) \ge \frac{1}{3}, ~~~~~  j=1, 2, 3.
\end{equation*}

\begin{figure}[ht]
	\begin{center}
		\includegraphics[width=8cm]{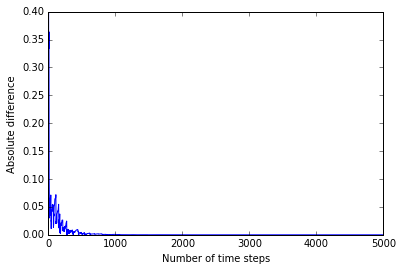}
		\centering
		\caption{A plot of the maximum of 
			$
			|p_1-\hat{p}_1| + |p_2-\hat{p}_3| + |p_3 - \hat{p}_3|
			$ over 1000 iterations, as a function of the number of time steps.}
		\label{diff}
	\end{center}
\end{figure}
Now suppose that the agent takes action $a_k = 1$ with probability $p_1$. Then we have that
$$
\E\left( \sum_{k=0}^{\infty}\gamma^k r^1(a_k)\right) = p_1.
$$
Similarly, we find that if the agent takes the action $a_k = j$ with probability $p_j$, $j=2,3$, then
$$
\E\left( \sum_{k=0}^{\infty}\gamma^k r^j(a_k)\right) = p_j .
$$

Without loss of generality, suppose that $p_1\le p_2 \le p_3$. Now the equality $p_1+p_2+p_3 = 1$ together with the Arithmetic-Geometric Mean Inequality imply that
$$
\frac{1}{3} = \frac{p_1+p_2+p_3}{3}\ge \sqrt[3]{p_1p_2p_3} \ge p_1
$$
with equality if and only if $p_1=p_2=p_3 = \frac{1}{3}$. Thus, in order to satisfy all of the three objectives, the agent's mixed strategy is unique and given by $p_1=p_2 =p_3= \frac{1}{3}$.

We have run 1000 iterations of a simulation of the learning algorithm as given by Theorem \ref{mainthm} over 5000 time steps (with respect to the time index $k$). As the above calculations showed, the probability distribution of the optimal policy is given by $p_1=p_2=p_3 = \frac{1}{3}$. Let $\hat{p}_1, \hat{p}_2, \hat{p}_3$ be the estimated probabilities based on the $Q$-learning algorithm given by Theorem \ref{mainthm}. In Figure~\ref{diff}, we see a plot of the maximum of the total error
$$
|p_1-\hat{p}_1| + |p_2-\hat{p}_2| + |p_3 - \hat{p}_3|
$$ 
over all iterations, 
as a function of the number of time steps. We see that it converges after 1000 time steps and stays stable for the rest of the simulation.

\subsection{Discrete time Single-Server Queue}
	In this subsection, we evaluate the proposed algorithm on a queuing system with a single server in discrete time, which is based on Example \ref{eg:1} described in the Introduction. In this model, we assume there is a buffer of finite size $L$. A possible arrival is assumed to occur at the beginning of the time slot. The state of the system is the number of customers waiting in the queue at the beginning of time slot such that $\vert S\vert=L+1$. We assume there are two kinds of actions, service action and flow action. The service action space is a finite subset $A$ of $[a_{min},a_{max}]$ and $0<a_{min}\leq a_{max}<1$. With a service action $a$, we assume that a service of a customer is successfully completed with probability $a$. If the service succeeds, the length of the queue will reduce by one, otherwise there is no change of the queue. The flow  is a finite subset $B$ of $[b_{min}, b_{max}]$ and $0\leq b_{min}\leq b_{max}<1$. Given a flow action $b$, a customer arrives during the time slot with probability $b$. Let the state at time $t$ be $x_t$. We assume that no customer arrives when state $x_t=L$ and thus can model this by the state update not increasing on customer arrival when $x_t=L$. Finally, the overall action space is the product of service action space and flow action space, i.e., $A\times B$. Given an action pair $(a,b)$ and current state $x_t$, the transition of this system $P(x_{t+1}|x_t,a_t=a,b_t=b)$ is shown in Table \ref{transition}.
	\begin{table*}   
		\caption{Transition probability of the queue system}  
		\label{transition}
		\begin{center}  
			\begin{tabular}{|c|c|c|c|}  
				\hline  
				Current State & $P(x_{t+1}=x_t-1)$ & $P(x_{t+1}=x_t)$ & $P(x_{t+1}=x_t+1)$ \\ \hline
				$1\leq x_t\leq L-1$ & $a(1-b)$ & $ab+(1-a)(1-b)$ & $(1-a)b$ \\ \hline
				$x_t=L$ & $a$ & $1-a$ & $0$ \\ \hline
				$x_t=0$ & $0$ & $1-b(1-a)$ & $b(1-a)$ \\ 
				\hline  
			\end{tabular}  
		\end{center}  
	\end{table*}
	
	Given this transition probability matrix, it is clear that the next state is only decided by the current state and current action, which means it is a Markov Decision Process. Moreover, the cost function $c(s,a,b)$ is assumed to be only related to the length of the queue and is a increasing linear function with respect to the state. It is reasonable because the cost can be seen as the expected waiting time by the Little's law. The less time customers wait, the lower the cost. Besides, there are two constraint functions, related to the service action and flow action, respectively. The service constraint function $c^1(s,a,b)$ is assumed to be only related to $a$ and increasing with the service action $a$, while the flow constraint function $c^2(s,a,b)$ is assumed to be only related to $b$ and decreasing with the service action $b$.
	
	Assuming that $\gamma=0.5$, we want to optimize the total discounted reward collected and satisfies two constraints with respect to service and flow simultaneously. Thus, the overall optimization problem is given as 
	\begin{equation}\label{opt_pro}
	\begin{aligned}
	\min\limits_{\pi^a, \pi^b}&\quad\mathbb{E}\bigg[\sum_{t=1}^{\infty}\gamma^tc(s_t, \pi^a(s_t), \pi^b(s_t))\bigg]\\
	s.t.
	&\quad \mathbb{E}\bigg[\sum_{t=1}^{\infty}\gamma^tc^1(s_t, \pi^a(s_t), \pi^b(s_t))\bigg]\leq 0,\quad \mathbb{E}\bigg[\sum_{t=1}^{\infty}\gamma^tc^2(s_t, \pi^a(s_t), \pi^b(s_t))\bigg]\leq 0,
	\end{aligned}
	\end{equation} 
	where $\pi^a_h$ and $\pi^b_h$ are the policies for the service and flow at time slot $h$, respectively. We note that the expectation in the above is with respect to both the stochastic policies and the transition probability. In order to match the constraints satisfaction problem modeled in this paper, we use the bisection algorithm on $\delta$ and transform the above problem to the following problem.
	\begin{equation}\label{trans_pro}
	\begin{aligned}
	Find &\quad \pi=(\pi^a,\pi^b)\\
	s.t.
	&\quad \mathbb{E}\bigg[\sum_{t=1}^{\infty}\gamma^tc(s_t, \pi^a(s_t), \pi^b(s_t))\bigg]\leq \delta\\
	&\quad \mathbb{E}\bigg[\sum_{t=1}^{\infty}\gamma^tc^1(s_t, \pi^a(s_t), \pi^b(s_t))\bigg]\leq 0\\
	&\quad \mathbb{E}\bigg[\sum_{t=1}^{\infty}\gamma^tc^2(s_t, \pi^a(s_t), \pi^b(s_t))\bigg]\leq 0,
	\end{aligned}
	\end{equation} 
	In the setting of the simulation, we choose the length of the queue $L=5$. We let the service action space be $A=[0.3,0.4,0.5,0.6,0.7]$ and the flow action space be $B=[0,0.2,0.4,0.6]$ for all states besides the state $s=L$. Moreover, the cost function is set to be 
	\begin{equation}
		c(s,a,b)=s-5
	\end{equation}
	the constraint function for the service is defined as 
	\begin{equation}
		c^1(s,a,b)=10a-5
	\end{equation} 
	and the constraint function for the flow is 
	\begin{equation}
		c^2(s,a,b)=5(1-b)^2-2
	\end{equation}
	
	For different values of $\delta$, the numerical results are given in Fig. \ref{fig:learn}. To show the performance of the algorithm, we choose the values of $\delta$ close to the real optimum value and thus the figure shows the performance with $\delta=9.5$, $9.55$, $9.575$,  $9.6$, $9.625$, and $9.7$. For each value of $\delta$,  we run the algorithm for $10^5$ iterations. Rather than evaluating the policy in each iteration, we evaluate the policy every 100 iterations, while evaluate at every iteration for the last 100 iterations. In order to get the expected value of the constraints, we collect 10000 trajectories and calculate the average constraint function value among them.  These constraint function values for the three constraints are plotted in Fig. \ref{fig:learn}. For $\delta=9.5$, we see that  the algorithm converges after about 60000 iterations and all three constraints are  larger than 0, which means that we find a feasible policy for the setting $\delta=9.5$.  Moreover, it is reasonable that all three constraints  converge to a same value since the proposed Algorithm \ref{alg:discounted} optimize the minimal value function among $V(s,a,o)$ with respect to $o$. We see that the three constraints for $\delta=9.5$ are close to each other and non-negative, thus demonstrating the constraints are satisfied and $\delta=9.5$ is feasible. 
	
	On the other extreme, we see the case when $\delta=9.7$. We note  that all three constraints are below 0, which means that there is no feasible policy in  this setting. Thus, seeing the cases for $\delta=9.5$ and $9.7$, we note that the optimal objective is between the two values. Looking at the case where $\delta=9.625$, we also note that the service constraint is clearly below zero and the constraints are not satisfied. Similarly, for $\delta=9.55$, the constraints are non-negative - the closest to zero are the service constraints which  are crossing zero every few iterations and thus the gap is within the margin. This shows that the optimal objective is within $9.55$ and $9.625$. However, the judgement is not as evident between the two regimes and it cannot be clearly mentioned from $\delta=9.575$ and $\delta=9.6$ if they are feasible or not since they are not consistently lower than zero after  80,000 iterations like in the case of $\delta=9.625$ and $\delta=9.7$, are are not mostly above zero as for $\delta=9.5$. Thus, looking at the figures, we estimate the value of optimal objective between $9.55$ and $9.625$.

In order to compare the result with the theoretical optimal total reward, we can assume the  dynamics of the MDP is known in advance and use the Linear Programming algorithm to solve the original problem. The result solved by the LP is $9.62$. 

We note that  $Q(s,a,o)$ has  $6\times5\times4\times 3=360$ elements and it is possible that $10^5$ iterations are not  enough to make all the elements in $Q$ table to converge.  Further, sampling $10^4$ trajectories can only achieve an accuracy of $0.1$ with $99\%$ confidence for the constraint function value and we are within that range. Thus, more iterations and more samples (especially more samples) would help improve the achievable estimate from $9.55$ in the algorithm performance.  Overall, considering the limited iterations and sampling in the simulations, we  conclude that the result by the proposed algorithm is close to the optimal result obtained by the Linear Programming.
	\begin{figure*}[htbp]
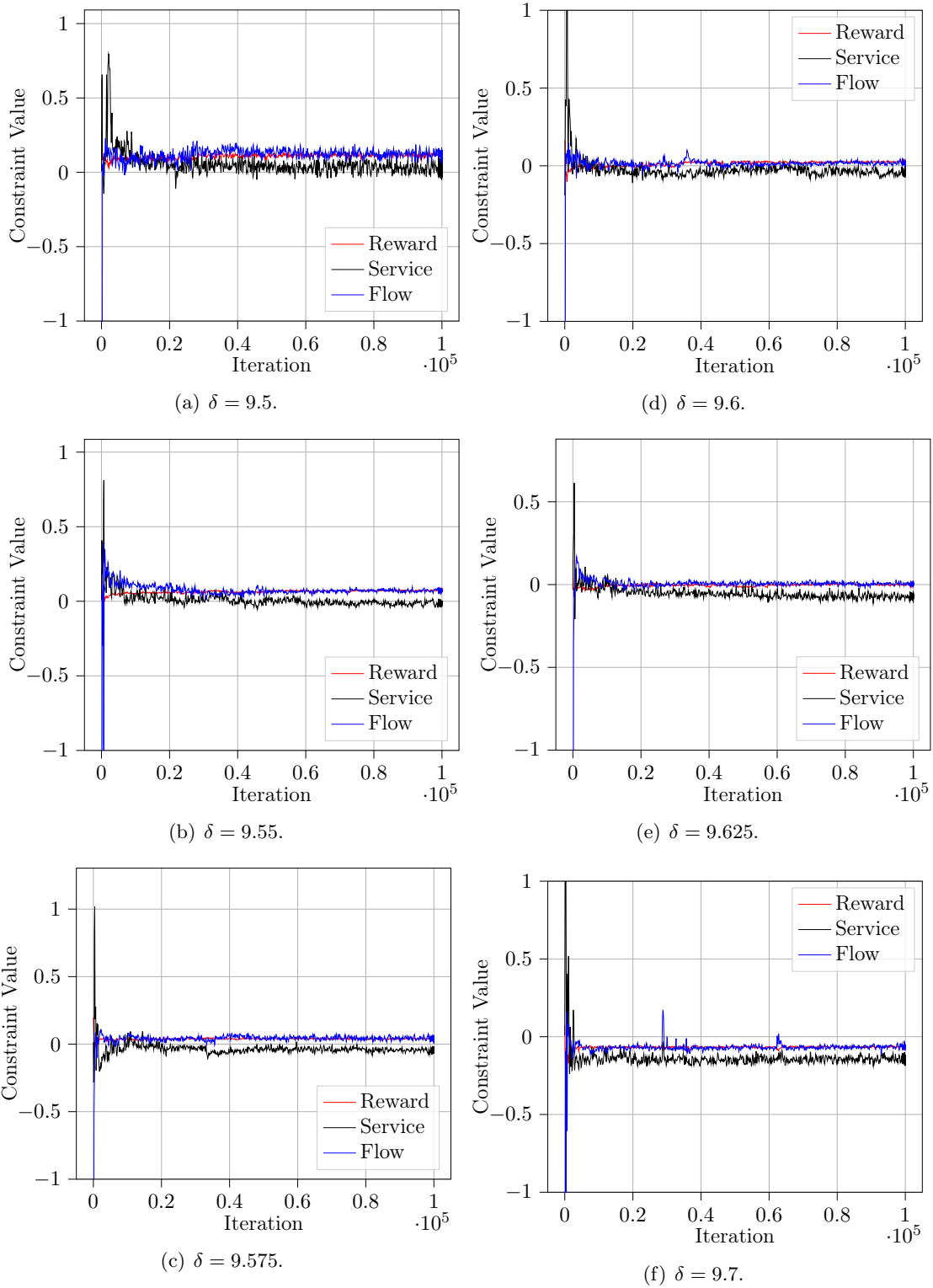

		\centering
			\begin{minipage}[h]{0.48\linewidth}
				\centering
				\subfigure[$\delta=9.5$. ]{
						\resizebox{\textwidth}{!}{%
				\input{bi_section_9.5.tex}}}
				\subfigure[$\delta=9.55$. ]{	\resizebox{\textwidth}{!}{%
					\input{bi_section_9.55.tex}}}
			\subfigure[$\delta=9.575$. ]{\resizebox{\textwidth}{!}{%
					\input{bi_section_9.575.tex}}}
			\end{minipage}%
		\vspace{.1in}
			\begin{minipage}[h]{0.48\linewidth}
				\centering
			\subfigure[$\delta=9.6$. ]{\resizebox{\textwidth}{!}{%
				\input{bi_section_9.6.tex}}}
			\subfigure[$\delta=9.625$. ]{	\resizebox{\textwidth}{!}{%
					\input{bi_section_9.625.tex}}}
				\subfigure[$\delta=9.7$. ]{\resizebox{\textwidth}{!}{%
						\input{bi_section_9.7.tex}}}
			\end{minipage}%
		\centering
		\caption{Value of constraints with iterations for the Zero-Sum Markov Bandit algorithm applied to Discrete time single-server queue. }
		\label{fig:learn}	
	\end{figure*}

\section{Conclusions}
\label{conc}
We considered the problem of optimization and learning for constrained and multi-objective Markov decision processes, for both discounted rewards and expected average rewards. We formulated the problems as zero-sum games where one player (the agent) solves a Markov decision problem and  its opponent solves a bandit optimization problem, which we call Markov-Bandit games. We extended $Q$-learning to solve Markov-Bandit games and proved that our new $Q$-learning algorithms converge to the optimal solutions of the zero-sum Markov-Bandit games, and hence converge to the optimal solutions of the constrained and multi-objective Markov decision problems. We provided  numerical examples and the  simulation results illustrate that the proposed algorithm converges to the optimal policy. 

It would be interesting to combine our algorithms with deep reinforcement learning and study the performance of the maximin $Q$-learning approach in this paper when the value function $Q$ is modeled as a deep neural network.

\bibliography{mybib}

\newpage

\appendix

\section{Proof of Theorem \ref{algo}}
The proof will rely on the following result.
\begin{prp}
	\label{delta}
	The random process $\{\Delta_k\}$ taking values in $\R$ and defined as
	$$
	\Delta_{k+1}(x) = (1-\alpha_k(x))\Delta_k(x) + \alpha_k(x)F_k(x)
	$$
	converges to zero with probability 1 under the following assumptions:
	\begin{itemize}
		\item[i.] For all $x$, $0\le \alpha_k(x)\le 1$, $\sum_k \alpha_k(x) = \infty$, and $\sum_k \alpha_k^2(x)<\infty$
		\item[ii.] $\|\E(F_k(x)\mid \mathcal{F}_k)\|_\infty\le \gamma \|\Delta_k\|_\infty$, with $\gamma<1$ 
		\item[iii.] $\E(F_k-\E(F_k(x))\mid \mathcal{F}_k))^2\le C(1+\|\Delta_k\|_\infty^2)$, for some constant $C>0$ 
	\end{itemize}
	where $\mathcal{F}_k$  is the sigma algebra $\sigma(\Delta_t, F_{t-1}, \alpha_{t-1},  t\le k)$.
\end{prp}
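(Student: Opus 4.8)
The plan is to treat Proposition \ref{delta} as a Dvoretzky-type stochastic-approximation statement and prove it by separating the contractive mean from the martingale noise, and then running an iterated-contraction (shrinking-bound) argument. First I would write $G_k(x) = \E(F_k(x)\mid\mathcal{F}_k)$ and $w_k(x) = F_k(x) - G_k(x)$, so that the recursion reads $\Delta_{k+1}(x) = (1-\alpha_k(x))\Delta_k(x) + \alpha_k(x)\big(G_k(x) + w_k(x)\big)$, where by hypothesis (ii) the drift obeys $|G_k(x)| \le \gamma\|\Delta_k\|_\infty$ with $\gamma<1$, and by hypothesis (iii) the noise is a martingale difference, $\E(w_k(x)\mid\mathcal{F}_k)=0$, with conditional second moment controlled by $C(1+\|\Delta_k\|_\infty^2)$.

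The first building block I would establish is a scalar noise lemma: if $X_{k+1}(x) = (1-\alpha_k(x))X_k(x) + \alpha_k(x)\eta_k(x)$ with $\eta_k$ a martingale difference of uniformly bounded conditional variance and $\alpha_k$ satisfying (i), then $X_k(x)\to 0$ almost surely. This is the classical Robbins--Monro/supermartingale computation: $\sum_k\alpha_k^2(x)<\infty$ makes the accumulated variance summable, while $\sum_k\alpha_k(x)=\infty$ together with $\alpha_k(x)\le 1$ forces the deterministic part to forget its initial condition, so the Robbins--Siegmund theorem (or an $L^2$ estimate followed by Borel--Cantelli along a suitable subsequence) yields convergence to zero.

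With the noise lemma in hand, the heart of the argument is a two-stage treatment. The delicate first stage is to show that $\{\Delta_k\}$ is almost surely bounded, and I expect this to be the main obstacle: the variance bound in (iii) grows like $\|\Delta_k\|_\infty^2$, so the circular dependence between the size of the iterate and the size of the noise has to be broken. I would break it by a rescaling/comparison argument, comparing $\Delta_k$ to the process obtained by freezing $\|\Delta_k\|_\infty$ at a running maximum and exploiting that the contraction factor $\gamma<1$ strictly dominates the at-most-linear growth of the noise standard deviation; this produces a supermartingale that cannot escape to infinity. The second stage is then the iterated contraction: on the event $\{\sup_k\|\Delta_k\|_\infty \le B\}$, fix $\beta\in(\gamma,1)$ and set $D_n = \beta^n B$; using $|G_k|\le\gamma\|\Delta_k\|_\infty$ together with the noise lemma applied within the $n$-th phase, I would show inductively that $\limsup_k\|\Delta_k\|_\infty \le D_n$ almost surely for every $n$. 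Since $D_n\to 0$, this gives $\Delta_k\to 0$ with probability one.

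Two technical points I would watch throughout. First, the step sizes are asynchronous and component-wise, $\alpha_k(x)$, whereas the contraction in (ii) couples all components through the single quantity $\|\Delta_k\|_\infty$; propagating a shrinking bound therefore requires that every component be updated infinitely often, which is guaranteed by $\sum_k\alpha_k(x)=\infty$ in (i), so that each coordinate eventually inherits the global bound. Second, I read (iii) as a bound on the conditional second moment of the noise $w_k$, which is what both the noise lemma and the boundedness argument actually consume. Granting these, the decomposition plus the iterated contraction deliver the claimed almost-sure convergence $\Delta_k\to 0$.
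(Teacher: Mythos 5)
The paper offers no proof of this proposition at all --- it defers entirely to the cited reference (Jaakkola, Jordan and Singh, 1994) --- and your outline is precisely the argument of that reference: split off the martingale noise, prove a Robbins--Monro noise lemma, establish almost-sure boundedness by a rescaling/supermartingale comparison, and then iterate the $\gamma$-contraction to drive $\limsup_k\|\Delta_k\|_\infty$ down through a geometric sequence $D_n=\beta^nB$ to zero. Your strategy is therefore correct and matches the source the paper relies on; the one step that is genuinely delicate, and that your plan only sketches, is the boundedness stage, where the $C(1+\|\Delta_k\|_\infty^2)$ conditional-variance bound must be neutralized by comparing $\Delta_k$ with a rescaled process whose noise has uniformly bounded variance before the shrinking-bound induction can be run.
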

\begin{proof}
	Consult \citep{jaakkola:1994}.	
\end{proof}	
Now let
$$
\Delta_k(s, a, o) = 
Q_k(s, a, o) - Q^\star(s, a, o)
$$

Subtracting 
$Q^\star$ from the right and left hand sides of the second equality in (\ref{q-learning}) implies that
\begin{equation*}
	\begin{aligned}
		&\Delta_{k+1}(s, a, o) = (1-\alpha(s,a,o))\Delta_k(s,a,o)+\\ &~+\alpha(s,a,o)(R(s,a,o)+\gamma  \E(Q_{k}(s_{+}, \pi_k(s_{+}), o)) - Q^\star(s,a,o)).
	\end{aligned}
\end{equation*}
We will show that $\Delta_k$ satisfies the conditions of Proposition \ref{delta}. Introduce the sigma algebra $\mathcal{F}_k=\sigma(\Delta_t, F_{t-1}, \alpha_{t-1},  t\le k)$.

Define
$$
F_k(s,a,o) = 1_{(s,a,o)}(s_k, a_k, o_k)\times (R(s,a,o) + \gamma  \E(Q_{k}(s_{+}, \pi_k(s_{+}), o)) -Q^\star(s,a,o))
$$
If $(s,a,o) \ne (s_k, a_k, o_k)$, then $F_k(s,a,o) = 0$. Else, 
\begin{equation*}
	\begin{aligned}
		\E(F_k(s,a,o)\mid \mathcal{F}_k) &= \sum_{s_{+}} P(s,a,s_{+})1_{(s,a,o)}(s_k, a_k, o_k)\times\left(R(s,a,o) + \right.\\
		&\hspace{0.7cm}\gamma \E(Q_{k}(s_{+}, \pi_k(s_{+}), o)) -Q^\star(s,a,o)\big)\\
		&= \sum_{s_{+}} P(s,a,s_{+})\left(R(s, a, o_k) + \right.\\
		&\hspace{0.7cm}\gamma \E(Q_k(s_{+}, \pi_k(s_+), o_k)) -Q^\star(s,a,o_k)\big)\\
		&= \sum_{s_{+}} P(s,a,s_{+})\left(R(s, a, o_k) +\gamma \E(Q_k(s_{+}, \pi_k(s_+), o_k)) - \right.\\
		&\hspace{0.7cm}  R(s, a, o_k) - \gamma \E(Q^\star(s_+,\pi^\star(s_+),o_k))\big)\\
		&= \gamma\sum_{s_{+}} P(s,a,s_{+})\left( \E(Q_k(s_{+}, \pi_k(s_+), o_k)) - \right.\\
		& \hspace{0.7cm}  \left. \E(Q^\star(s_+,\pi^\star(s_+),o_k))\right)\\
	\end{aligned}
\end{equation*}
If $\E(Q_k(s_{+}, \pi_k(s_+), o_k)) \ge  \E(Q^\star(s_+,\pi^\star(s_+), o_k))$, then
\begin{eqnarray*}
	&&\bigg|\mathbf{E}[Q_k(s_{+}, \pi_k(s_+),o_k)] -
	\mathbf{E}[Q^\star(s_+,\pi^\star(s_+),o_k)]\bigg| \\ 
	&=& \E(Q_k(s_{+}, \pi_k(s_+), o_k)) - 
	\E(Q^\star(s_+,\pi^\star(s_+),o_k)) \\
	&=& R(s, a, o_k) + \E(Q_k(s_{+}, \pi_k(s_+), o_k)) - R(s, a, o_k) -
	\E(Q^\star(s_+,\pi^\star(s_+),o_k)) \\
	&\le& R(s, a, o_k) + \E(Q_k(s_{+}, \pi_k(s_+), o_k)) - R(s, a, o^\star) -
	\E(Q^\star(s_+,\pi^\star(s_+), o^\star)) \\
	&\le&  R(s, a, o^\star) +\E(Q_k(s_{+}, \pi_k(s_+), o^\star)) - R(s, a, o^\star) -
	\E(Q^\star(s_+,\pi^\star(s_+), o^\star)) \\
	&\le& R(s, a, o^\star) + \E(Q_k(s_{+}, \pi_k(s_+), o^\star)) - R(s, a, o^\star) -
	\E(Q^\star(s_+,\pi_k(s_+), o^\star)) \\
	&=& \left|\E(Q_k(s_{+}, \pi_k(s_+), o^\star)
	- Q^\star(s_+,\pi_k(s_+),o^\star))\right| \\
	&\le& \max_{s, a, o} \left|Q_k(s, a, o) -
	Q^\star(s, a, o)\right| \\
	&=&  \left\| Q_k - Q^\star  \right\|_\infty. \label{diffQ}
\end{eqnarray*}
Else, if $\E(Q_k(s_{+}, \pi_k(s_+), o_k)) \le  \E(Q^\star(s_+,\pi^\star(s_+),o_k))$, then
\begin{eqnarray*}
	&&\bigg|\E[Q_k(s_{+}, \pi_k(s_+), o_k)] -
	\E[Q^\star(s_+,\pi^\star(s_+),o_k)]\bigg| \label{diffQ02} \\ 
	&=& \E(Q^\star(s_+,\pi^\star(s_+),o_k)) - \E(Q_k(s_{+}, \pi_k(s_+), o_k))\\
	&\le& \E(Q^\star(s_+,\pi^\star(s_+),o_k)) - \E(Q_k(s_{+}, \pi^\star(s_+), o_k))\\
	&=& \left| \E(Q_k(s_+,\pi^\star(s_+),o_k) - Q^\star(s_{+}, \pi^\star(s_+), o_k)) \right|\\
	&\le& \max_{s, a, o} \left|Q_k(s, a, o) -
	Q^\star(s, a, o)\right| \\
	&=&  \left\| Q_k - Q^\star \right\|_\infty. \label{diffQ2}
\end{eqnarray*}
Thus, 
\begin{equation}
	\label{TQ}
	\begin{aligned}
		&\|\E(F_k(s,a,o)\|_\infty = \\
		&= \gamma \max_{s,a,o}\left| \sum_{s_+} P(s,a,s_{+})\left( \E(Q_k(s_{+}, \pi_k(s_+), \phi_k(s_+))) -  \E(Q^\star(s_+,\pi^\star(s_+),\phi_k(s_+)))\right)\right|\\
		&\le \gamma \max_{s,a,o} \sum_{s_+} P(s,a,s_{+}) \left|\left( \E(Q_k(s_{+}, \pi_k(s_+), \phi_k(s_+))) -  \E(Q^\star(s_+,\pi^\star(s_+),\phi_k(s_+)))\right)\right|\\
		&\le \gamma \max_{s,a,o} \sum_{s_+} P(s,a,s_{+}) \left\| Q_k - Q^\star \right\|_\infty \\
		&= \gamma \left\| Q_k - Q^\star \right\|_\infty \\
		&= \gamma \|\Delta_k\|_\infty
	\end{aligned}
\end{equation}	
where the first inequality follows from the triangle inequality and the fact that $P(s,a,s_+)\ge 0$. 	
Also, we have that
\begin{equation*}
	\begin{aligned}
		&\E(F_k-\E(F_k) \mid \mathcal{F}_k))^2=\\
		&=\gamma^2\E\Big(Q_k(s_{+}, \pi_k(s_+), \phi_k(s_+)) - Q^\star(s_+,\pi^\star(s_+),\phi_k(s_+))-\\ 
		&\hspace{2.25cm}-\sum_{s_+} P(s,a,s_+)\left(Q_k(s_{+}, \pi_k(s_+), \phi_k(s_+)) - Q^\star(s_+,\pi^\star(s_+),\phi_k(s_+))\right)\Big)^2\\
		&=\gamma^2\E\Big(\Delta_k(s_{+}, \pi_k(s_+), \phi_k(s_+)) -\\ 
		&\hspace{2.25cm}-\sum_{s_+} P(s,a,s_+)\left(\Delta_k(s_{+}, \pi_k(s_+), \phi_k(s_+)) \right)\Big)^2\\
		&\le C(1+\|\Delta_k\|_\infty^2). 
	\end{aligned}
\end{equation*}
Thus, $\Delta_k=Q_{k} -Q^\star$ satisfies the conditions of Proposition \ref{delta} and hence converges to zero with probability 1, i. e. $Q_k$ converges to $Q^\star$ with probability 1.

\section{Proof of Theorem \ref{algo2}}	

\begin{lem}
	\label{contraction}
	Let the operator $\bT$ be given by
	{\small
		\begin{equation}
		\label{T1}
		\begin{aligned}
		(\mathbf{T}Q)(s,a,o) %
		&= \sum_{s_+} P(s,a,s_+)\max_{\pi\in \Pi} \min_{o    \in O}  \left(R(s,a,o) + \E(Q(s_+, \pi(s_+), o))\right). 
		\end{aligned}
		\end{equation}
	}	
	Then, 
	$$
	\|\bT Q_1 - \bT Q_2\|_\infty \le \|Q_1 - Q_2\|_\infty.
	$$
\end{lem}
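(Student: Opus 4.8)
The plan is to show that the operator $\mathbf{T}$ defined in (\ref{T1}) is a non-expansion in the sup-norm. The key structural fact I would exploit is that $\mathbf{T}$ is a composition of two operations: an inner maximin operation applied pointwise, followed by averaging against the transition probabilities $P(s,a,\cdot)$. Since a convex combination (averaging against a probability distribution) is trivially a non-expansion in $\|\cdot\|_\infty$, the whole argument reduces to controlling the maximin operation. So the first step is to define, for each fixed $(s,a)$, the quantity
\[
G_i(s,a) \triangleq \max_{\pi\in\Pi}\min_{o\in O}\Big(R(s,a,o) + \E(Q_i(s_+,\pi(s_+),o))\Big), \qquad i=1,2,
\]
and to establish the pointwise bound $|G_1(s,a) - G_2(s,a)| \le \|Q_1 - Q_2\|_\infty$.

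The heart of the proof is this pointwise maximin bound, which I would carry out exactly as in the standard $Q$-learning contraction arguments already appearing in the proof of Theorem \ref{algo} in the appendix. Concretely, I would let $(\pi_1,\phi_1)$ attain the maximin for $Q_1$ and $(\pi_2,\phi_2)$ attain it for $Q_2$, and then bound the difference $G_1 - G_2$ in both directions. Assuming without loss of generality $G_1 \ge G_2$, I would use the fact that $\pi_1$ achieves the outer max for $Q_1$ but is merely feasible (not optimal) for $Q_2$, together with the fact that $\phi_2$ achieves the inner min for $Q_1$ under $\pi_1$, to produce a chain of inequalities in which the $R$ terms cancel and what remains is an expectation of $Q_1 - Q_2$ evaluated at a common policy/opponent pair. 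This is precisely the telescoping argument used in deriving the bound leading to (\ref{TQ}) in the proof of Theorem \ref{algo}, so I would essentially transcribe that reasoning. The reverse inequality follows symmetrically, giving
\[
|G_1(s,a) - G_2(s,a)| \le \max_{s,a,o}|Q_1(s,a,o) - Q_2(s,a,o)| = \|Q_1 - Q_2\|_\infty.
\]

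The final step is to assemble these pieces. Using the triangle inequality and $\sum_{s_+}P(s,a,s_+)=1$ with $P(s,a,s_+)\ge 0$,
\[
\big|(\mathbf{T}Q_1)(s,a,o)-(\mathbf{T}Q_2)(s,a,o)\big|
\le \sum_{s_+}P(s,a,s_+)\,\big|G_1(s,a)-G_2(s,a)\big|
\le \|Q_1-Q_2\|_\infty,
\]
and taking the supremum over $(s,a,o)$ yields the claimed non-expansion $\|\mathbf{T}Q_1 - \mathbf{T}Q_2\|_\infty \le \|Q_1 - Q_2\|_\infty$.

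The main obstacle I anticipate is the maximin interchange step: unlike an unconstrained $Q$-learning operator where one compares two plain maxima, here each of $G_1,G_2$ is a maximin over two different variables $\pi$ and $o$, and one must be careful about which optimizers are shared in the cross-comparison. The trick is never to compare the two optimal policy/opponent pairs directly, but to insert a \emph{mixed} pair (the optimal policy from one problem with the optimal opponent from the other) as an intermediate bound, exactly as is done in the appendix proof of Theorem \ref{algo}. Note also that the statement claims only a non-expansion ($\gamma$-free), not a strict contraction, which is consistent with the earlier remark in the excerpt that $\mathbf{T}$ is not a contraction in the average-reward setting; so I would not attempt to extract a contraction factor, and the averaging step suffices to close the argument.
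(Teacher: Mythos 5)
Your proposal is correct and follows essentially the same route as the paper's proof: reduce to a pointwise bound on the maximin difference via the cross-comparison trick (evaluating one problem's optimal policy against the other's optimal opponent so the $R$ terms cancel), then close with the triangle inequality and $\sum_{s_+}P(s,a,s_+)=1$. The only cosmetic point is that your intermediate quantity should carry the dependence on $s_+$ (i.e., $G_i(s,a,s_+)$ rather than $G_i(s,a)$), since the maximin in (\ref{T1}) sits inside the sum over $s_+$; the argument is unaffected.
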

\begin{proof}
	\begin{equation}
	\label{TQ}
	\begin{aligned}
	&\|\bT Q_1 - \bT Q_2\|_\infty =\\
	&= \max_{s,a,o}\left| \sum_{s_+} P(s,a,s_+)\left(\max_{\pi\in \Pi} \min_{o    \in O} (R(s,a,o) +  \E(Q_1(s_+, \pi(s_+), o)))-\right. \right. \\
	& \hspace{2cm} - \left. \max_{\pi\in \Pi} \min_{o    \in O} (R(s,a,o) +  \E(Q_2(s_+, \pi(s_+), o)))\right)\Bigg| \\
	&\le \max_{s,a,o} \sum_{s_+} P(s,a,s_+) \left| \max_{\pi\in \Pi} \min_{o    \in O} (R(s,a,o) +  \E(Q_1(s_+, \pi(s_+), o))) - \right. \\
	& \hspace{2cm} \left. - \max_{\pi\in \Pi} \min_{o    \in O} (R(s,a,o) +  \E(Q_2(s_+, \pi(s_+)), o))\right)\Big| \\
	\end{aligned}
	\end{equation}	
	where the last inequality follows from the triangle inequality and the fact that $P(s,a,s_+)\ge 0$. 
	Without loss of generality, assume that 
	\[
	\begin{aligned}
		&\max_{\pi\in \Pi} \min_{o    \in O} (R(s,a,o) +  \E(Q_1(s_+, \pi(s_+), o))) \\ 
		& \ge \max_{\pi\in \Pi} \min_{o    \in O} (R(s,a,o) +  \E(Q_2(s_+, \pi(s_+), o)))\big).
	\end{aligned} 
	\]
	Introduce
	$$
	(\pi_i, o_i) = \arg \max_{\pi\in \Pi}\min_{o \in O} R(s,a,o) + \E(Q_i(s_+, \pi(s_+), o)).
	$$
	Then, 
	\begin{eqnarray}
	&&\left| \max_{\pi\in \Pi} \min_{o    \in O} \big(R(s,a,o) +  \E(Q_1(s_+, \pi(s_+), o))\big) - \right. \\
	&& \hspace{2cm} \left. - \max_{\pi\in \Pi} \min_{o    \in O} \big(R(s,a,o) +  \E(Q_2(s_+, \pi(s_+), o))\big)\right|  \nonumber\\
	&=&\max_{\pi\in \Pi} \min_{o    \in O} \big(R(s,a,o) +  \E(Q_1(s_+, \pi(s_+), o))\big)- \\
	&& \hspace{2cm} - \max_{\pi\in \Pi} \min_{o\in O} \big(R(s,a,o) +  \E(Q_2(s_+, \pi(s_+), o))\big)  \nonumber\\
	&=&R(s,a,o_1) +  \E(Q_1(s_+, \pi_1(s_+), o_1)) - \left(R(s,a,o_2) +  \E(Q_2(s_+, \pi_2(s_+), o_2))\right)  \nonumber\\
	&\le& R(s,a,o_2) +  \E(Q_1(s_+, \pi_1(s_+), o_2))- \left(R(s,a,o_2) +  \E(Q_2(s_+, \pi_2(s_+), o_2))\right)   \nonumber\\
	&\le& R(s,a,o_2) +  \E(Q_1(s_+, \pi_1(s_+), o_2)) - \left(R(s,a,o_2) +  \E(Q_2(s_+, \pi_1(s_+), o_2))\right)  \nonumber\\
	&=& \left| \E(Q_1(s_+, \pi_1(s_+), o_2))  - Q_2(s_+, \pi_1(s_+), o_2))\right|  \nonumber\\
	&\le&\max_{s_+,a,o} \left| Q_1(s_+, a, o) - Q_2(s_+, a, o) \right|\\  
	&=&  \left\| Q_1 - Q_2 \right\|_\infty. \label{diffQ}
	\end{eqnarray}
	Combining (\ref{TQ})--(\ref{diffQ}) implies that
	\begin{equation}
	\label{contract}
	\begin{aligned}
	&\|\bT Q_1 - \bT Q_2\|_\infty \le\\
	&\le \max_{s,a,o} \sum_{s_+} P(s,a,s_+) \left\| Q_1 - Q_2 \right\|_\infty \\
	&= \left\| Q_1 - Q_2 \right\|_\infty
	\end{aligned}
	\end{equation}	
	and the proof is complete.		
\end{proof}

\begin{lem}
	The operator $\bT$ given by (\ref{T1}) is a span semi-norm, that is
	\begin{equation}
	\label{spannorm}
	\|\bT Q_1 - \bT Q_2\|_s \le \|Q_1 - Q_2\|_s
	\end{equation} 	
	where 
	$$
	\|Q\|_s \triangleq \max_{s,a,o} Q(s,a,o) - 
	\min_{s,a,o} Q(s,a,o).
	$$
\end{lem}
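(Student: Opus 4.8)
The plan is to establish the span semi-norm contraction property in close parallel to the supremum-norm nonexpansiveness already proved in Lemma~\ref{contraction}, exploiting the translation-invariance of the span semi-norm. The key structural observation is that the operator $\bT$ in \eqref{T1} contains the term $R(s,a,o)$ additively, and more importantly that the maximin expression inside $\bT$ depends on $Q$ only through the expectations $\E(Q(s_+,\pi(s_+),o))$, which are linear in $Q$. This means that if we shift $Q$ by a constant, $\widehat{Q}(s,a,o) = Q(s,a,o) + r$, then $(\bT\widehat{Q})(s,a,o) = (\bT Q)(s,a,o) + r$, because the constant $r$ passes through the expectation and then through both the $\max_\pi$ and $\min_o$ operators unchanged, and finally through the convex combination $\sum_{s_+} P(s,a,s_+)(\cdots)$ which sums to one. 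This affine-equivariance is exactly the property needed to convert a supremum-norm bound into a span bound.

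First I would recall the standard characterization of the span semi-norm: for any two functions $Q_1, Q_2$,
\begin{equation}
\|Q_1 - Q_2\|_s = \min_{r\in\R} \, 2\,\|Q_1 - Q_2 - r\|_\infty,
\end{equation}
or equivalently that $\|Q\|_s = 2\min_{r\in\R}\|Q - r\|_\infty$. I would then pick the optimizing constant $r^\star$ for the pair $(Q_1,Q_2)$, so that $\|Q_1 - Q_2\|_s = 2\|Q_1 - Q_2 - r^\star\|_\infty$. Setting $\widehat{Q}_2 = Q_2 + r^\star$, the equivariance gives $\bT\widehat{Q}_2 = \bT Q_2 + r^\star$, and hence $\bT Q_1 - \bT Q_2 - r^\star = \bT Q_1 - \bT\widehat{Q}_2$. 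Applying the supremum-norm nonexpansiveness from Lemma~\ref{contraction} to $Q_1$ and $\widehat{Q}_2$ yields
\begin{equation}
\|\bT Q_1 - \bT\widehat{Q}_2\|_\infty \le \|Q_1 - \widehat{Q}_2\|_\infty = \|Q_1 - Q_2 - r^\star\|_\infty.
\end{equation}

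Then I would close the argument by using the other direction of the span characterization. Since $\|\bT Q_1 - \bT Q_2\|_s = 2\min_r \|\bT Q_1 - \bT Q_2 - r\|_\infty \le 2\|\bT Q_1 - \bT Q_2 - r^\star\|_\infty$, and the right-hand side equals $2\|\bT Q_1 - \bT\widehat{Q}_2\|_\infty$ by the equivariance, we chain:
\begin{equation}
\|\bT Q_1 - \bT Q_2\|_s \le 2\,\|\bT Q_1 - \bT\widehat{Q}_2\|_\infty \le 2\,\|Q_1 - Q_2 - r^\star\|_\infty = \|Q_1 - Q_2\|_s,
\end{equation}
which is precisely \eqref{spannorm}. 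The main obstacle, and the step deserving the most care, is verifying the equivariance $(\bT(Q+r)) = (\bT Q) + r$ rigorously through the nested $\max_\pi\min_o$ structure; one must confirm that adding a constant inside $\E(Q(s_+,\pi(s_+),o))$ produces exactly $+r$ after the maximin and the transition averaging, which relies on $r$ being independent of the optimization variables $\pi$ and $o$ and on $\sum_{s_+}P(s,a,s_+)=1$. A secondary subtlety is stating the span characterization correctly (whether the factor of $2$ appears depends on the exact definition), so I would double-check the constant against the definition $\|Q\|_s = \max Q - \min Q$ given in the lemma, adjusting the bookkeeping accordingly; the contraction factor $1$ is unaffected by these constant conventions.
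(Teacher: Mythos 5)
Your proof is correct, but it takes a genuinely different route from the paper's. The paper proves the span bound by direct estimation: it expands $\|\bT Q_1-\bT Q_2\|_s$ as a maximum minus a minimum over $(s,a,o)$, bounds the difference of the two maximin expressions from above and below by substituting suboptimal choices of $o$ and of the action, sandwiches the resulting pointwise differences between $\max(Q_1-Q_2)$ and $\min(Q_1-Q_2)$, and finally uses $\sum_{s_+}P(s,a,s_+)=1$. You instead reduce the span statement to the already-established sup-norm nonexpansiveness of Lemma~1 via two structural facts: the variational identity $\|Q\|_s=2\min_{r\in\R}\|Q-r\cdot\textup{e}\|_\infty$ (which does hold with the factor $2$ under the definition $\|Q\|_s=\max Q-\min Q$, the optimizer being $r^\star=\tfrac{1}{2}(\max+\min)$ applied to $Q_1-Q_2$), and the translation equivariance $\bT(Q+r\cdot\textup{e})=\bT Q+r\cdot\textup{e}$, which is immediate since the constant $r$ passes through $\E(\cdot)$, through $\max_\pi\min_o$, and through the convex combination over $s_+$. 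The chain $\|\bT Q_1-\bT Q_2\|_s\le 2\|\bT Q_1-\bT(Q_2+r^\star\cdot\textup{e})\|_\infty\le 2\|Q_1-Q_2-r^\star\cdot\textup{e}\|_\infty=\|Q_1-Q_2\|_s$ then closes the argument. What your approach buys is brevity and transparency: it reveals that span nonexpansiveness is an automatic consequence of sup-norm nonexpansiveness plus translation equivariance for any such operator, and it reuses Lemma~1 rather than repeating a delicate maximin estimation; it also sidesteps the somewhat informal selection of the auxiliary points $o_i$ and $a_i$ in the paper's direct computation. What the paper's approach buys is self-containedness at the level of this single lemma and an explicit display of where the transition kernel's stochasticity enters. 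Both are valid proofs of the stated inequality.
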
	
\begin{proof}
	We start off by noting the trivial inequalities
	\begin{equation}
	\label{trivial}
	\begin{aligned}
	&\max_{s',a',o'} \left(Q_1(s', a', o') - Q_2(s', a', o')\right) \\
	&\ge Q_1(s_+, a_+, o) - Q_2(s_+, a_+, o)\\ 
	&\ge \min_{s',a',o'} \left(Q_1(s', a', o') - Q_2(s', a', o')\right).	
	\end{aligned}
	\end{equation}
	Also, let 
	$$
	o_i = \arg \min_{o    \in O} R(s,a,o) + Q_i(s_+, \pi(s_+), o)
	$$
	and
	$$
	a_i = \arg \max_{a    \in A} Q_i(s, a, o_j), ~~~~ i\neq j.
	$$
	The definition of the span semi-norm implies that
	\begin{equation}
	\label{spannormproof}
	\begin{aligned}
	&\|\bT Q_1 - \bT Q_2\|_s =\\
	&=\left\|\sum_{s_+} P(s,a,s_+)\left(\max_{\pi\in \Pi} \min_{o    \in O} (R(s,a,o) +  \E(Q_1(s_+, \pi(s_+), o)))-\right. \right. \\
	& \hspace{2cm} - \left.\left. \max_{\pi\in \Pi} \min_{o    \in O} (R(s,a,o) +  \E(Q_2(s_+, \pi(s_+), o)))\right)\right \|_s\\
	&= \max_{s,a,o} \sum_{s_+} P(s,a,s_+)\left(\max_{\pi\in \Pi} \min_{o    \in O} (R(s,a,o) +  \E(Q_1(s_+, \pi(s_+), o)))-\right.  \\
	& \hspace{2cm} - \left. \max_{\pi\in \Pi} \min_{o    \in O} (R(s,a,o) +  \E(Q_2(s_+, \pi(s_+), o)))\right) \\
	&~~~-\min_{s,a,o}  \sum_{s_+} P(s,a,s_+)\left(\max_{\pi\in \Pi} \min_{o    \in O} (R(s,a,o) +  \E(Q_1(s_+, \pi(s_+), o)))-\right. \\
	& \hspace{2cm} - \left. \max_{\pi\in \Pi} \min_{o    \in O} (R(s,a,o) +  \E(Q_2(s_+, \pi(s_+), o)))\right) \\
	&\le \max_{s,a,o} \sum_{s_+} P(s,a,s_+)\left(\max_{\pi\in \Pi} (R(s,a,o_2) +  \E(Q_1(s_+, \pi(s_+), o_2)))-\right.  \\
	& \hspace{2cm} - \left. \max_{\pi\in \Pi} (R(s,a,o_2) +  \E(Q_2(s_+, \pi(s_+), o_2)))\right) \\
	&~~~-\min_{s,a,o}  \sum_{s_+} P(s,a,s_+)\left(\max_{\pi\in \Pi} %
	(R(s,a,o_1) +  \E(Q_1(s_+, \pi(s_+), o_1)))-\right. \\
	& \hspace{2cm} - \left. \max_{\pi\in \Pi} %
	(R(s,a,o_1) +  \E(Q_2(s_+, \pi(s_+), o_1)))\right) \\
	&\le\max_{s,a,o} \sum_{s_+} P(s,a,s_+)\left(Q_1(s_+, a_1, o_2))-Q_2(s_+, a_1, o_2))\right) \\
	&~~~-\min_{s,a,o}  \sum_{s_+} P(s,a,s_+)\left(Q_1(s_+, a_2, o_1))-Q_2(s_+, a_2, o_1)) \right) \\
	&\le \max_{s,a,o} \sum_{s_+} P(s,a,s_+)\times \max_{s',a',o'} \left(Q_1(s', a', o') - Q_2(s', a', o')\right)\\
	&~~~ -\min_{s,a,o} \sum_{s_+} P(s,a,s_+)\times \min_{s',a',o'} \left(Q_1(s', a', o') - Q_2(s', a', o')\right)\\
	&= \max_{s',a',o'} \left(Q_1(s', a', o') - Q_2(s', a', o')\right)- \min_{s',a',o'} \left(Q_1(s', a', o') - Q_2(s', a', o')\right)\\
	&= \|Q_1 - Q_2\|_s.
	\end{aligned}
	\end{equation} 	
\end{proof}	
For convenience, let $\textup{e}: (s,a,o)\mapsto 1$ be a constant tensor with all elements equal to 1.
\begin{lem}
	\label{Qgas}
	Let $f\in \Phi$ be given, where the set $\Phi$ is defined as in Definition \ref{phi} and let $$\bT'(Q) = \bT(Q) - f(Q)\cdot \textup{e}$$
	The ordinary differential equation (ODE) 
	\begin{equation}
	\label{qdash}
	\dot{Q}(t) = \bT'(Q(t)) - Q(t)	%
	\end{equation}
	has a unique globally asymptotically stable equilibrium $Q^\star$, with $f(Q^\star) = v^\star$, where $Q^\star$ and $v^\star$ satisfy (\ref{Qrewardopt}).	
\end{lem}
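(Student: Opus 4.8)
The plan is to follow the ODE-based stochastic-approximation program of Abounadi, Bertsekas and Borkar, in which global asymptotic stability of (\ref{qdash}) is reduced to three structural facts about $\bT$: monotonicity, invariance under constant shifts $\bT(Q+c\,\textup{e})=\bT Q+c\,\textup{e}$, and the span-seminorm estimate from the two preceding lemmas. None of these uses smoothness, which is essential since $\bT$ and $f$ are only piecewise smooth.

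First I would characterize the rest points. A point $Q^\star$ is an equilibrium of (\ref{qdash}) iff $\bT'(Q^\star)=Q^\star$, i.e. $\bT Q^\star=Q^\star+f(Q^\star)\,\textup{e}$; setting $v^\star=f(Q^\star)$ this is exactly the maximin average-reward optimality equation (\ref{Qrewardopt}), and conversely. Existence of a solution of (\ref{Qrewardopt}), with $v^\star$ unique and the relative value unique up to an additive multiple of $\textup{e}$, holds under Assumption \ref{recurrentstate} by the average-reward theory underlying the Proposition that gives (\ref{H}). Uniqueness of the equilibrium then follows from property 3 of $\Phi$ in Definition \ref{phi}: if $Q_0$ is any solution, then $c\mapsto f(Q_0+c\,\textup{e})=f(Q_0)+c$ is strictly increasing, so exactly one shift makes $f(Q_0+c\,\textup{e})=v^\star$; this singles out $Q^\star$, and $f(Q^\star)=v^\star$ automatically.

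Next I would set up the stability estimates. Writing $x(t)=Q(t)-Q^\star$ and using $\bT'(Q^\star)=Q^\star$, the ODE reads $\dot x=[\bT'(Q)-\bT'(Q^\star)]-x$. Let $M(t)=\max_{s,a,o}x$ and $m(t)=\min_{s,a,o}x$. From $Q^\star+m\,\textup{e}\le Q\le Q^\star+M\,\textup{e}$, monotonicity and the constant-shift property give $\bT Q^\star+m\,\textup{e}\le \bT Q\le \bT Q^\star+M\,\textup{e}$, and together with the Lipschitz and shift properties of $f$ this also yields $m\le f(Q)-v^\star\le M$. Evaluating $\dot x$ at a coordinate attaining the maximum (via Danskin's theorem applied to the upper Dini derivative of $M$) gives $\dot M\le -(f(Q)-v^\star)$, and symmetrically $\dot m\ge -(f(Q)-v^\star)$; subtracting, the span $\|x\|_s=M-m$ is nonincreasing. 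This already delivers Lyapunov stability and boundedness of trajectories.

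To upgrade to convergence I would strengthen the span-seminorm lemma to a genuine contraction, $\|\bT Q_1-\bT Q_2\|_s\le\alpha\|Q_1-Q_2\|_s$ with $\alpha<1$, which is where Assumption \ref{recurrentstate} is used: the common recurrent state forces the ergodicity (Hajnal) coefficient of the composite operator below $1$. Converting this into the flow via the Dini inequality $D^+\|x\|_s\le(\alpha-1)\|x\|_s$ gives $\|x(t)\|_s\le e^{-(1-\alpha)t}\|x(0)\|_s\to 0$, so $M-m\to 0$ and the trajectory collapses onto $Q^\star+\R\,\textup{e}$. Since $f(Q)-v^\star\in[m,M]$ is squeezed to the common limit while $\dot M\le -(f(Q)-v^\star)$ and $\dot m\ge -(f(Q)-v^\star)$, that common limit can only be $0$; hence $M,m\to 0$ and $Q(t)\to Q^\star$ in $\|\cdot\|_\infty$ from every initial condition, which is global asymptotic stability. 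I expect the main obstacle to be exactly this last upgrade: extracting the strict span contraction from the recurrence hypothesis (the lemmas only give nonexpansiveness) and justifying the passage from a discrete-time contraction to a continuous-time decay rate for a nonsmooth vector field, which forces careful use of Dini derivatives and Danskin's theorem rather than ordinary differentiation of the inner $\max$--$\min$ and of $f$.
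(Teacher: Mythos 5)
Your rest-point analysis is fine: the equilibria of \eqref{qdash} are exactly the solutions of $\bT Q = Q + f(Q)\cdot\textup{e}$, and property 3 of Definition \ref{phi} picks out the unique point on the line $Q^\star+\R\,\textup{e}$ with $f(Q^\star)=v^\star$; your Dini estimates $D^+M\le -(f(Q)-v^\star)$ and $D_+m\ge -(f(Q)-v^\star)$, obtained from monotonicity and shift-invariance of $\bT$, are also sound and give that the span is nonincreasing. The proof breaks at the convergence step. You propose to upgrade the span estimate \eqref{spannorm} to a strict one-step contraction $\|\bT Q_1-\bT Q_2\|_s\le\alpha\|Q_1-Q_2\|_s$ with $\alpha<1$, deduced from Assumption \ref{recurrentstate}. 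That is false in general: a common recurrent state controls the ergodicity (Hajnal) coefficient only of a sufficiently long \emph{product} of transition kernels, not of a single step. A deterministic two-state cycle has every state recurrent under every policy, yet the one-step coefficient equals $1$ and the one-step operator is merely span-nonexpansive — which is precisely all that the paper's preceding lemma proves, and all that is true. One can obtain a strict contraction for some $N$-fold composition $\bT^N$, but your inequality $D^+\|x\|_s\le(\alpha-1)\|x\|_s$ needs the one-step constant; an $N$-step contraction does not convert into a pointwise-in-time decay rate for the flow $\dot Q=\bT'(Q)-Q$. This is exactly the difficulty the paper avoids by splitting the dynamics into $\dot y=\widehat{\bT}(y)-y$ with $\widehat{\bT}=\bT-v^\star\textup{e}$ nonexpansive in $\|\cdot\|_\infty$ — to which the Borkar--Soumyanath convergence theorem for ODEs with nonexpansive right-hand sides applies, yielding convergence to the fixed-point line $Q^\star+\R\,\textup{e}$ — plus a scalar ODE along the $\textup{e}$-direction driven by $v^\star-f(\cdot)$ that collapses the remaining degree of freedom (Lemmas 3.1--3.3 and Theorem 3.4 of the cited Abounadi et al.\ reference).

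A secondary flaw: the sandwich $m\le f(Q)-v^\star\le M$ does not follow from Definition \ref{phi}, which imposes neither monotonicity of $f$ nor a sup-norm Lipschitz constant $\le 1$; what you actually get is $|f(Q)-v^\star-\tfrac{M+m}{2}|\le L\tfrac{M-m}{2}$. Your final squeeze argument survives (it only needs $f(Q)-v^\star$ to approach the common limit of $M$ and $m$ once the span has vanished), but the intermediate stability claims drawn from that sandwich need to be rederived from the Lipschitz bound.
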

\begin{proof}
	Introduce the operator
	\begin{equation*}
	\begin{aligned}
	\widehat{\bT}(Q) &= \bT(Q) - v\cdot \text{e}.
	\end{aligned}
	\end{equation*}
	According to lemma \ref{contraction}, we have that
	$$\|\bT Q_1 - \bT Q_2\|_\infty \le \|Q_1 - Q_2\|_\infty$$
	and hence, $\bT$ is Lipschitz. It's easy to verify that 
	$$\widehat{\bT}(Q_1) - \widehat{\bT}(Q_2) = \bT(Q_1) - \bT(Q_2)$$
	and therefore
	\begin{equation*}
	\begin{aligned}
	\|\widehat{\bT}(Q_1) - \widehat{\bT}(Q_2)\|_\infty 
	&\le \|Q_1 - Q_2\|_\infty,\\
	\|\widehat{\bT}(Q_1) - \widehat{\bT}(Q_2)\|_s\hspace{1.5mm} 
	&\le \|Q_1 - Q_2\|_s.
	\end{aligned}
	\end{equation*}
	Now consider the ODE:s
	\begin{equation}
	\label{qhat}
	\dot{Q}(t) = \widehat{\bT}(Q(t)) - Q(t)	
	\end{equation}
	and
	\begin{equation}
	\label{qdash}
	\dot{Q}(t) = \bT'(Q(t)) - Q(t)	= \widehat{\bT}(Q(t)) + (v - f(Q))\cdot \text{e}.
	\end{equation}
	Note that since $\bT$ and $f$ are Lipschitz, the ODE:s (\ref{qhat}) and (\ref{qdash}) are well posed.
	
	Since $\bT$ is Lipschitz and span semi-norm, the rest of the proof becomes identical to Theorem 3.4 along with Lemma 3.1, 3.2, and 3.3 in  \citep{abounadi:2001} and hence omitted here. 
\end{proof}

\begin{prp}[Borkar \& Meyn, 2000: Theorem 2.5]
	\label{borkar1}
	Consider the asynchronous algorithm given by 
	$$
	Q_{k+1} = Q_k + \alpha_k(h(Q_k) + M_{k+1})
	$$
	where $\alpha_k(s,a,o) = 1_{(s, a, o)}(s_k, a_k, o_k)\times \beta_{N(k,s,a,o)}$. 
	Suppose that 
	\begin{enumerate}
		\item $M_k$ is a martingale sequence with respect to 
		the sigma algebra $\mathcal{F}_k  = \sigma(Q_t, M_t, t\le k)$, that is
		$$
		\E(M_{k+1} \mid \mathcal{F}_k) = 0
		$$
		and that there exists a constant $C_1>0$ such that
		$$
		\E(\|M_{k+1}\|^2\mid \mathcal{F}_k)\le 
		C_1(1+\|Q_k\|^2).
		$$
		\item Assumptions \ref{lr} and \ref{ou} hold. 
		\item The limit
		$$
		h_\infty(X) = \lim_{z\rightarrow \infty} \frac{h(zX)}{z} %
		$$  
		exists.
		\item $\dot{Q}(t) = h(Q(t))$ has a unique globally asymptotically stable equilibrium $Q^\star$.
	\end{enumerate}
	Then, $Q_k \rightarrow Q^\star$ with probability 1 as $k\rightarrow \infty$ for any initial value $Q(0)$.
\end{prp}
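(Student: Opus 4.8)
The plan is to reproduce the two-stage ODE-method argument of Borkar and Meyn, since the statement is an abstract stochastic-approximation convergence theorem rather than a claim about the particular operator $\bT$. Accordingly I would not invoke any of the problem-specific structure; I would use only that $h$ is Lipschitz (the standing hypothesis of the Borkar--Meyn framework, which in the intended application of this proposition holds because $\bT'=\bT-f(\cdot)\,\textup{e}$ is Lipschitz by Lemma \ref{contraction} and Definition \ref{phi}) together with the four listed conditions. The argument splits into three parts: (i) proving almost-sure boundedness $\sup_k\|Q_k\|<\infty$; (ii) showing that once the iterates are bounded the interpolated trajectory tracks the flow of $\dot Q=h(Q)$ and hence converges to its unique globally asymptotically stable equilibrium $Q^\star$; and (iii) reducing the asynchronous recursion to that single synchronous ODE via the clock conditions in Assumptions \ref{lr} and \ref{ou}.

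The first and hardest part is the \emph{stability} theorem, and this is where condition 3 is decisive. I would introduce the scaled vector fields $h_c(x)=h(cx)/c$, so that $h_c\to h_\infty$ uniformly on compacta and $\dot x=h_\infty(x)$ has the origin as its unique globally asymptotically stable equilibrium. Because the origin is g.a.s. for the limit field, there exist $T>0$ and $\eta\in(0,1)$ such that every solution of $\dot x=h_\infty(x)$ starting on the unit sphere lies inside the ball of radius $\eta$ at time $T$. I would then form the piecewise-linear interpolation of $\{Q_k\}$ on the fictitious time axis $t(k)=\sum_{j<k}\beta_j$, rescale each length-$T$ block by the norm of the iterate at the block's left endpoint, and show---via Gronwall's inequality applied to the difference between the rescaled interpolation and the $h_\infty$-flow, with the stochastic part controlled by condition 1 and $\sum_k\beta_k^2<\infty$---that whenever $\|Q_k\|$ is large it is contracted by a definite factor over the next block. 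A standard induction over blocks then precludes escape to infinity and yields $\sup_k\|Q_k\|<\infty$ almost surely. Essentially all the difficulty of the proof lives in this boundedness step.

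For the second part I would run the classical ODE method. Given a.s. boundedness, $h$ Lipschitz makes $\dot Q=h(Q)$ globally well posed; the step-size conditions $\sum_k\beta_k=\infty$ and $\sum_k\beta_k^2<\infty$ together with the conditional second-moment bound in condition 1 force the accumulated noise $\sum_j \beta_j M_{j+1}$ to converge (by a square-integrable martingale convergence theorem, equivalently the Kushner--Clark estimate), so the interpolated trajectory is an asymptotic pseudo-trajectory of the semiflow of $\dot Q=h(Q)$. The limit set of the iterates is then an internally chain-transitive invariant set of that flow; since condition 4 supplies a \emph{unique} globally asymptotically stable equilibrium $Q^\star$, the only such set is $\{Q^\star\}$, and therefore $Q_k\to Q^\star$ almost surely.

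It remains to justify that the asynchronous per-component clocks $N(k,s,a,o)$ do not disturb this picture. Here I would appeal to Borkar's asynchronous stochastic-approximation framework: Assumption \ref{ou} guarantees every triple $(s,a,o)$ is updated with asymptotic frequency bounded below by $d>0$, while item 4 of Assumption \ref{lr} (uniform convergence of the partial step-size ratios) forces the per-component timescales to be asymptotically identical, and items 1--2 of Assumption \ref{lr} control the residual non-uniformity of the effective step sizes. Under these conditions the componentwise interpolation tracks the same single ODE $\dot Q=h(Q)$ as the synchronous scheme, so parts (i)--(ii) apply verbatim. The decisive obstacle throughout is the stability argument of the first part; the tracking and asynchrony reductions are, by comparison, routine applications of the ODE method.
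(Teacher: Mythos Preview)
The paper does not prove this proposition at all: it is quoted verbatim as Theorem~2.5 of Borkar and Meyn (2000) and invoked as a black box in the proof of Theorem~\ref{algo2}. Your sketch is therefore not being compared against a proof in the paper but against the original Borkar--Meyn argument, and at that level your three-part outline (stability via the scaled field $h_\infty$, ODE tracking on the bounded set, asynchronous reduction via the clock conditions) is the correct architecture and matches the source.

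One point worth flagging: in your stability step you assert that the origin is the unique globally asymptotically stable equilibrium of $\dot x=h_\infty(x)$, but the proposition \emph{as stated here} only assumes the limit $h_\infty$ exists (condition~3) and that $\dot Q=h(Q)$ has a g.a.s.\ equilibrium (condition~4). The Borkar--Meyn stability theorem genuinely requires the extra hypothesis that the origin is g.a.s.\ for the limiting ODE $\dot x=h_\infty(x)$; without it the contraction-over-blocks argument you describe has no starting point. So either the paper's transcription of the hypotheses is slightly abridged, or this is a gap you should make explicit rather than silently assume. In the paper's intended application this is harmless---$h(Q)=\bT Q-f(Q)\,\textup{e}-Q$ gives $h_\infty(X)=\bar\bT X-f(X)\,\textup{e}-X$, and Lemma~\ref{Qgas} (with $R\equiv 0$) supplies the needed g.a.s.\ property---but as a proof of the abstract proposition your argument uses more than is written in the statement.
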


\begin{proof}[Proof of Theorem \ref{algo2}]
	Introduce the operator
	{
		\begin{equation*}
		\begin{aligned}
		(\mathbf{T}Q)(s,a,o) %
		&= \sum_{s_+} P(s,a,s_+)\max_{\pi\in \Pi} \min_{o\in O}\left(R(s,a,o) + \E(Q(s_+, \pi(s_+), o))\right). \\
		\end{aligned}
		\end{equation*}
	}
	For convenience, let %
	$$\alpha_k(s,a,o) = 1_{(s, a, o)}(s_k, a_k, o_k)\cdot \beta_{N(k,s,a,o)},$$ 
	{
		$$
		M_{k+1}(s,a,o) = \max_{\pi\in \Pi} \min_{o\in O}(R(s,a,o) + \E(Q_k(s_{k+1}, \pi(s_{k+1}), o))) - (\bT Q_k)(s,a,o), 
		$$
	}
	and 
	$$
	h(Q) = \bT Q- f(Q)\cdot\text{e} - Q.
	$$
	Then, 
	$$
	Q_{k+1} = Q_k + \alpha_k( h(Q_k) + M_{k+1}).
	$$	
	We will now show that conditions 1 - 4 in Proposition \ref{borkar1} hold, and therefore $Q_k\rightarrow Q^\star$ with probability 1, where $Q^\star$ is the solution to (\ref{Qrewardopt}).
	\begin{enumerate}
		\item Let $\mathcal{F}_k$ be the sigma algebra $\sigma(Q_t, M_t, t\le k)$. Clearly,
		$$
		\E(M_{k+1} \mid \mathcal{F}_k) = 0
		$$
		and
		$$
		\E(\|M_{k+1}\|^2\mid \mathcal{F}_k)\le 
		C_1(1+\|Q_k\|^2)
		$$
		for some constant $C_1>0$. 
		\item We have supposed that assumptions \ref{lr} and \ref{ou} hold.
		\item Let $h(X) = \bT(X) - X - f(X) \cdot \text{e}$ and introduce
		\begin{equation}
		\label{Tbar}
		\begin{aligned}
		(\bar{\bT}Q)(s,a,o) %
		&= \max_{a_+\in A} \sum_{s_+} P(s,a,s_+)Q(s_+, a_+, o). 
		\end{aligned}
		\end{equation}
		Then, the limit
		\begin{equation*}
		\begin{aligned}
		h_\infty(X) &= \lim_{z\rightarrow \infty} h(zX)/z \\
		&= \bar{\bT}(X) -X - f(X) \cdot \text{e}
		\end{aligned}
		\end{equation*}
		exists. 
		\item By noting that 
		$$h(x) = \bT(X) - X - f(X) \cdot \text{e} = \bT'(X) - X$$
		we can apply Lemma \ref{Qgas} and conclude that $\dot{Q}(t) = h(Q(t))$ has a unique globally asymptotically stable equilibrium $Q^\star$.
	\end{enumerate}
	Thus, according to Proposition \ref{borkar1}, the iterators $Q_k$
	in (\ref{optQ}) converge to $Q^\star$, where $h(Q^\star)=0$ and hence the unique solution to (\ref{Qrewardopt}).
	Thus, the policy $\pi^\star \in \Pi$ given by 
	$$\pi^\star(s) = \argmax_{\pi} \min_{o\in O} Q^\star(s, \pi(s), o)$$
	maximizes (\ref{minreward2}), and the proof is complete.
\end{proof}

\section{Proof of Theorem \ref{0sumgame}}
Let
$$
\mathcal{L}(\pi, j) = \E \left(\sum_{k=0}^{\infty}\gamma^k r^j(s_k, \pi(s_k))\right).
$$
Consider the zero-sum game
\begin{equation*}
\max_{\pi\in\Pi}  \min_{j \in [J]} \mathcal{L}(\pi, j).
\end{equation*} 
Suppose that $\pi$ is a policy such that 
\begin{equation*}
\E \left(\sum_{k=0}^{\infty}\gamma^k r^j(s_k, \pi(s_k))\right) < 0
\end{equation*}
for some $j$. Then, 
$$
\mathcal{L}(\pi, j) < 0
$$
which implies
$$
\min_{j \in [J]}\mathcal{L}(\pi, j) < 0.
$$
Thus, if
\begin{equation*}
\label{maxminLambda}
\max_{\pi\in\Pi}  \min_{j \in [J]} \mathcal{L}(\pi, j) \ge 0
\end{equation*}
then, there must exist a policy $\pi$ that satisfies
\begin{equation}
\E \left(\sum_{k=0}^{\infty}\gamma^k r^j(s_k, \pi(s_k))\right) \ge 0
\end{equation}
for all $j$, and we get
$$
\min_{j \in [J]} \mathcal{L}(\pi, j) \ge 0.
$$
On the other hand, suppose that
\begin{equation*}
\max_{\pi\in\Pi} \min_{j \in [J]} \mathcal{L}(\pi, j) < 0.
\end{equation*} 
Then, there doesn't exist a policy $\pi$ such that
\begin{equation*}
\E \left(\sum_{k=0}^{\infty}\gamma^k r^j(s_k, \pi(s_k))\right) \ge 0
\end{equation*}
for all $j$, because it would imply that 
\begin{equation*}
\max_{\pi\in\Pi} \min_{j \in [J]} \mathcal{L}(\pi, j) \ge 0
\end{equation*} 
which is a contradiction, and the proof is complete.

\section{Proof of Theorem \ref{0sumgame2}}
Let
\begin{equation*}
\begin{aligned}
\mathcal{L}(\pi, j) %
& = \lim_{T\rightarrow \infty} \E \left(\frac{1}{T}\sum_{k=0}^{T-1} r^j(s_k, \pi(s_k))\right)
\end{aligned}
\end{equation*}
where the expectation is taken over $s_k$ and $\pi$. 
The rest of the proof is similar to the proof of Theorem \ref{0sumgame}.
\end{document}